\newtheorem*{ut}{Theorems}
\newtheorem*{ut4}{Theorem III}
\newtheorem*{uts}{Theorem}
\newtheorem{ul}{Lemma}
\newtheorem{uq}{Question}
\newtheorem{up}{Proposition}
\newtheorem{ucl}[subsection]{Claim}
\theoremstyle{definition}
\newtheorem*{ur}{Remark}
\DeclareMathOperator{\diam}{diam}
\DeclareMathOperator{\ran}{ran}
\author{David Sumner Lipham}
\email{dsl0003@auburn.edu}
\numberwithin{equation}{section}
\begin{document}

\title[One-to-one composant mappings]{One-to-one composant mappings\\  
of $[0,\infty)$ and $(-\infty,\infty)$}



\maketitle

\renewcommand{\thefootnote}{}

\footnote{2010 \emph{Mathematics Subject Classification}: 54A20, 54C10, 54F15, 54F50}

\footnote{\emph{Key words and phrases}: continuum, composant, real line, half-line}

\renewcommand{\thefootnote}{\arabic{footnote}}
\setcounter{footnote}{0}

\vspace{-15mm}

\begin{abstract}%

Knaster continua and solenoids are well-known examples of indecomposable continua whose composants (maximal arcwise-connected subsets) are one-to-one images of lines. We show that essentially   all non-trivial  one-to-one  composant images of (half-)lines are indecomposable.  And if $f$ is a one-to-one mapping of $[0,\infty)$ or $(-\infty,\infty)$, then there is an indecomposable continuum of which  $X:=\ran(f)$  is a composant if and only if $f$ maps  all final or initial segments densely and every non-closed sequence of arcs in $X$ has a convergent subsequence in the hyperspace $K(X)\cup \{X\}$. 
Accompanying the proofs are illustrations and examples.
\end{abstract}

\

\section{Introduction}
Throughout, $[0,\infty)$ denotes the half-line and $(-\infty,\infty)$ denotes the entire real line.  Every mapping is assumed to be continuous; by  \textit{image} we shall always mean  \textit{continuous image}.\footnote{Being a one-to-one  image  of  $[0,\infty)$ is the same as being the union of a strictly increasing sequence of arcs which share a common endpoint. Among locally connected, locally compact spaces, there are only $3$ such images, and there are only $5$ such images of $(-\infty,\infty)$ -- Lelek \& McAuley \cite{lm} and Nadler \cite{nad}.  Much is also known about  other types of images:  compact \cite{nad3,beck}, confluent \cite{nad2}, aposyndetic \cite{jon2}, uniquely arcwise-connected \cite{roma}, hereditarily unicoherent \cite{over}. }   All images of the (half-)line are assumed  to be metrizable, and by a \textit{continuum} (plural form \textit{continua}) we mean a connected compact metrizable space.  An \textit{arc} is a homeomorphic copy of the interval $[0,1]$.

A continuum $Y$ is \textit{decomposable} if there are two subcontinua $H,K\subsetneq Y$ such that $Y=H\cup K$; otherwise $Y$ is \textit{indecomposable}. We shall say, more generally, that  a connected space $X$ is \textit{indecomposable} if $X$ cannot be written as the union of two proper closed connected subsets.  Equivalently, $X$ is indecomposable if $X$ is the only closed connected subset of $X$ with non-void interior.

If $Y$ is a continuum and $x\in Y$,  then \textit{$X$ is the composant of $x$ in $Y$} means  that $$X=\bigcup \{K\subsetneq Y:K \text{ is a continuum and }x\in K\}.$$   More generally, \textit{$X$ is a composant of $Y$} if  there exists $x\in X$ such that $X$ is the composant of $x$ in $Y$.

Given a continuum $Y$, a line $\ell\in \{[0,\infty),(-\infty,\infty)\}$, and a mapping $f:\ell\to Y$, one easily sees that   $\ran(f):=f[\ell]$ (the \textit{range of $f$}) is contained in a composant of $Y$.  The goal of this paper is to describe all one-to-one  images of (half)-lines which are \textit{homeomorphic to}  composants of continua. Theorem I classifies  all decomposable composant images, while  Theorem II provides an internal characterization ``composant-ness'' which is independent of any particular embedding. 


\begin{ut}Let $f:\ell\to X$ be a one-to-one mapping of $\ell\in \{[0,\infty),(-\infty,\infty)\}$ onto a metrizable space $X$. 

\noindent \textnormal{\textbf{I.}} There is a decomposable continuum of which $X$ is a composant if and only if

\begin{enumerate}
\item[$(\textnormal A)$]$X$ is compact  $($this is the only possibility 
  if $\ell=(-\infty,\infty)$$)$;
\item[$(\textnormal B)$] $X\simeq [0,\infty)$, i.e. $f$ is a homeomorphism; or
\item[$(\textnormal C)$]  $\exists\; s\in (0,\infty)$ such that $f[0,s)$ is open in $X$ and $f[s,\infty)$ is an indecomposable composant.
\end{enumerate}
\hspace{\parindent}\begin{minipage}[t]{12.3cm}
Moreover,  if $X$ is neither compact nor equal to the half-line, and  $Y$ is a decomposable continuum of which $X$ is a composant, then there exists $s\in (0,\infty)$ such that $Y\setminus f[0,s)$ is an indecomposable continuum of which $f[s,\infty)$ is a composant.\end{minipage}

\vspace{2mm}

\noindent \textnormal{\textbf{II.}} There is an indecomposable continuum of which  $X$ is a composant  if and only if 
\begin{enumerate}
\item[$(1)$]  $\overline{f[n,\infty)}=X$ or $\overline{f[(-\infty,-n]\cap \ell]}=X$ for every $n<\omega$; and
\item[$(2)$]  $\overline{\bigcup \{A_n:n<\omega\}}\in K(X)\cup \{X\}$ 
 for every sequence of arcs $(A_n)\in [K(X)]^\omega$ such that $c(X)\cap \prod \{A_n:n<\omega\}\neq\varnothing$.
\end{enumerate}
\hspace{\parindent}
\end{ut}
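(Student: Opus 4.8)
The plan is to prove both implications. The forward direction (an indecomposable $Y$ with $X$ as a composant exists $\Rightarrow$ $(1),(2)$) I expect to fall out of two classical facts about an indecomposable continuum $Y$: every composant is dense, and every proper subcontinuum is nowhere dense; in particular a proper subcontinuum that meets a composant is contained in that composant. So suppose $X$ is the composant of $x_0$ in indecomposable $Y$. Each arc $f[0,k]$ (and each $f[-k,k]$ in the line case) is a proper, hence nowhere dense, subcontinuum. For the half-line, $Y=f[0,k]\cup\overline{f[k,\infty)}$ together with $f[0,k]\neq Y$ and indecomposability forces $\overline{f[k,\infty)}^Y=Y$; for the line I would first split $Y=\overline{f[0,\infty)}^Y\cup\overline{f(-\infty,0]}^Y$, conclude by indecomposability that one piece is all of $Y$, and then run the same argument on that side. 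Intersecting with $X$ gives $(1)$.

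For $(2)$, let $(A_n)$ be arcs in $X$ with a selection $x_n\in A_n$, $x_n\to x\in X$, and set $M=\overline{\bigcup_n A_n}^Y$. The crucial point is that the convergent selection confines the remainder $M\setminus X$ to the component of $x$. Indeed, if $V\subseteq M$ is clopen and omits $x$, then cofinitely many $x_n$ lie in the open set $M\setminus V$, so each such connected $A_n$ (lying in $M=V\sqcup(M\setminus V)$) avoids $V$; since $\bigcup_n A_n$ is dense in $M$, the clopen set $V$ equals the finite union $\bigcup_{n\in F}A_n$ and hence $V\subseteq X$. Intersecting all clopen sets containing $x$, the component $C$ of $x$ satisfies $M\setminus X\subseteq C$. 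If $C=Y$ then $M=Y$ and $\overline{\bigcup_n A_n}^X=X$; otherwise $C$ is a proper subcontinuum meeting the composant $X$, so $C\subseteq X$, whence $M\subseteq X$ and $\overline{\bigcup_n A_n}^X=M\in K(X)$. This is exactly $(2)$.

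The real content is the reverse implication, where I must manufacture an indecomposable $Y\supseteq X$ having $X$ as a composant. Normalizing by $(1)$, I may assume the final segments $f[k,\infty)$ are dense in $X$. I would embed $X$ densely in a metrizable continuum $Y=\overline{X}$ and try to verify simultaneously that (a) the composant of $x_0=f(0)$ in $Y$ is exactly $X$, and (b) $Y$ is indecomposable. Density together with $(1)$ at once yields $\overline{f[k,\infty)}^Y=Y$, so any subcontinuum of $Y$ that contains a full final segment must be all of $Y$; meanwhile $(2)$ is precisely the hypothesis that lets me conclude that every proper subcontinuum meeting $X$ is a \emph{compact} subset of $X$. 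These are the two levers: the first shows a proper subcontinuum cannot swallow a tail, and the second keeps proper subcontinua inside $X$, together pinning the composant of $x_0$ down to $X$ and preventing any proper subcontinuum from acquiring interior.

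The main obstacle is building the embedding so that (a) and (b) hold \emph{at the same time}: a naive compactification (say the one-point compactification, which makes $X\simeq[0,\infty)$ into an arc) destroys indecomposability, and the excluded cases of Theorem~I show that $(1)$ is exactly what rules such compactifications out. I therefore expect to construct $Y$ not as an arbitrary closure but by a hyperspace/inverse-limit procedure in which the adjoined remainder points encode the non-convergent sequences of arcs in $X$; here condition $(2)$ supplies the compactness needed to make the remainder nowhere dense and to force any subcontinuum with nonempty interior, via a limit of arcs meeting $X$ densely, to contain a dense tail and hence to equal $Y$. Verifying that this construction yields a metrizable \emph{indecomposable} continuum whose single distinguished composant is precisely $X$ is the step I anticipate being the most delicate, and it is where both $(1)$ and $(2)$ must be used in full strength.
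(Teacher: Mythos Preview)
Your forward direction for Theorem II is correct and essentially matches the paper's argument. The paper takes the component in $M=\overline{\bigcup A_n}^Y$ of a point $y\in M\setminus X$ rather than of $x$, but your clopen-set argument is precisely what justifies the paper's terse ``observe that $x\in K$'': in a compactum the quasi-component of $x$ coincides with its component, and you have shown $M\setminus X$ lies in that quasi-component. Your derivation of (1) is likewise the paper's Proposition 3.

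The reverse direction is where the real content lies, and here your proposal has a genuine gap. You correctly diagnose that a naive compactification fails and that the embedding must be engineered so that proper subcontinua of $Y$ meeting $X$ stay inside $X$; but ``a hyperspace/inverse-limit procedure in which the adjoined remainder points encode the non-convergent sequences of arcs'' is not a construction, and it is not what the paper does. The paper's mechanism is quite different. For a dense sequence $(d_i)\subseteq X$, one covers $X\setminus B(d_i,\tfrac{1}{i})$ by a closed set $X_i$ which is a countable disjoint union of arcs (plus possibly one non-arc continuum in the line case). Condition (2) is then used, via Sierpi\'nski's theorem and a recursive separation argument, to prove that the decomposition space $\widetilde{X_i}=\{A_n\}$ is \emph{zero-dimensional}; this is the paper's Lemma 1 and is the technical heart of the proof. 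A basis of clopen sets in each $\widetilde{X_i}$ is pulled back to $X$ and separated by Urysohn functions $f_{\langle i,j\rangle}:X\to[0,1]$, and $X$ is embedded in $[0,1]^\omega$ with these as coordinates. In the closure $Y$, any compact proper $K\subseteq Y$ misses some $\overline{B(d_i,\tfrac1i)}$ and hence sits in $\overline{X_i}$; the Urysohn coordinates then disconnect $K$ as soon as it contains both a point of $X$ and a point of $Y\setminus X$. This shows $X$ is a composant of $Y$, and indecomposability of $Y$ follows from (1) via Proposition 3.

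Nothing in your plan supplies the zero-dimensionality lemma or an embedding with the separation property above, and without them there is no mechanism preventing proper subcontinua of $Y$ from bridging $X$ and the remainder. That is exactly the ``most delicate'' step you flag, and it requires substantial work beyond what you have sketched. Finally, note that your proposal does not address Theorem I at all.
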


In condition (2) of Theorem II, $K(X)$  is the set of non-empty compact subsets of $X$, and $c(X)=\{(x_n)\in X^\omega:(\exists x\in X)(x_n\to x\text{  as }n\to\infty)\}$  is the set of convergent point sequences in $X$. 

In Section 7 we will prove two fairly general lemmas to obtain the following. 
 
 \begin{ut4}For every linear composant $X$ there is a continuum $Y\subseteq [0,1]^3$ such that $\dim(Y)=1$ and  $X$ is a composant of $Y$.\end{ut4}
  
  \noindent Here and elsewhere, the term \textit{linear} is used to indicate a space which is a one-to-one image of the line or half-line.
 
 We conclude in Sections 8 and 9 with  several relevant examples and a list of important questions about  composant embeddings,  chainability, and indecomposability of   first category  plane images.



\section{Recurrence}

Suppose  $f$ is a mapping of $\ell \in \{[0,\infty),(-\infty,\infty)\}$. Let $X=\ran(f)$.  If $\ell=[0,\infty)$, then we say $f$ is \textit{recurrent} if $\overline{f[n,\infty)}=X$ for each $n<\omega$. If $\ell=(-\infty,\infty)$, then $f$ is:
\begin{itemize}
\item \textit{positively-recurrent} if $\overline{f[n,\infty)}=X$ for each $n<\omega$;
\item \textit{negatively-recurrent} if $\overline{f(-\infty,-n]}=X$ for each $n<\omega$;
\item \textit{recurrent} if $f$ is positively \tt{or} \textnormal{negatively recurrent; and}
\item \textnormal{\textit{bi-recurrent} if $f$ is both positively }\tt{and}\textnormal{ negatively recurrent.}
\end{itemize}
\begin{ur}According to definitions, condition (1) in Theorem II says ``$f$ is recurrent''.\end{ur}

\begin{up} If $f$ is one-to-one and recurrent, then $X$ is of the first category of Baire. \end{up}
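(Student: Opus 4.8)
The plan is to write $X$ as a countable union of closed nowhere dense subsets. First I would cover $\ell$ by the compact intervals $[0,n]$ (if $\ell=[0,\infty)$) or $[-n,n]$ (if $\ell=(-\infty,\infty)$) and set $A_n:=f[0,n]$ or $A_n:=f[-n,n]$ accordingly. Because $f$ is a continuous injection, its restriction to each of these compact intervals is a homeomorphism onto its image, so each $A_n$ is an arc; being a continuous image of a compact set, $A_n$ is compact and hence closed in the metrizable space $X$. Since $\ran(f)=\bigcup\{A_n:n<\omega\}$, it remains only to prove that each $A_n$ is nowhere dense, which for a closed set amounts to showing that its complement $X\setminus A_n$ is dense.

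For the density of $X\setminus A_n$ I would invoke recurrence, taking care to shift the index by one so as to obtain a tail of the (half-)line that is genuinely disjoint from $A_n$. In the half-line case, or when $f$ is positively recurrent, injectivity gives $f[0,n]\cap f[n+1,\infty)=\varnothing$, so $X\setminus A_n\supseteq f[n+1,\infty)$; and recurrence supplies $\overline{f[n+1,\infty)}=X$, whence $X\setminus A_n$ is dense. When $f$ is negatively recurrent the symmetric argument applies with the tail $f(-\infty,-n-1]$, which is likewise disjoint from $A_n=f[-n,n]$ and dense by negative recurrence. In either case $A_n$ is closed with dense complement, hence nowhere dense, and therefore $X=\bigcup\{A_n:n<\omega\}$ is of the first category.

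The argument is short, and I expect the only point requiring care to be the index shift in the second paragraph: the consecutive pieces $f[0,n]$ and $f[n,\infty)$ share the endpoint $f(n)$, so the tail beginning at $n$ is not disjoint from $A_n$, and a dense set could a priori meet $A_n$ at that common endpoint. Passing to the tail beginning at $n+1$ removes this overlap and makes the disjointness---and hence the density of the complement---immediate. Everything else is routine point-set topology.
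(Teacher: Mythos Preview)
Your proof is correct and follows essentially the same approach as the paper: write $X$ as the union of the compact sets $f[0,n]$ (resp.\ $f[-n,n]$) and use recurrence to see each is nowhere dense. The paper's proof is terser and does not mention the index shift, but your extra care there is harmless (and indeed $X\setminus f[0,n]\supseteq f(n,\infty)\supseteq f[n+1,\infty)$, so either formulation works).
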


\begin{proof}By the hypotheses, if $\ell=[0,\infty)$ then each $f[0,n]$ is nowhere dense in $X$. Likewise, if $\ell=(-\infty,\infty)$ then each $f[-n,n]$ is nowhere dense in $X$.
\end{proof}


\begin{up} If  $X$ is non-degenerate and indecomposable, then $f$ is recurrent.\end{up}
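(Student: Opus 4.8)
The plan is to prove the contrapositive: assuming $f$ is not recurrent, I will exhibit $X$ as a \emph{finite} union of proper closed connected subsets and thereby contradict indecomposability. The engine is the internal characterization already recorded in the introduction: $X$ is indecomposable precisely when $X$ is the only closed connected subset of $X$ with non-void interior; equivalently, every \emph{proper} closed connected subset of $X$ is nowhere dense. Since a finite union of closed nowhere dense sets is again nowhere dense (an elementary fact needing no Baire hypothesis), it will suffice to write $X$ as such a union with all summands proper.

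First the half-line case $\ell=[0,\infty)$. If $f$ is not recurrent, I fix $n<\omega$ with $\overline{f[n,\infty)}\ne X$, so that $X=f[0,n]\cup\overline{f[n,\infty)}$. The second set is closed, connected (the closure of a continuous image of a connected set), and proper by the choice of $n$; the first set $f[0,n]$ is a continuum. I then check that $f[0,n]\ne X$: otherwise $X$ would be a non-degenerate Peano continuum — a continuous image of an arc is a locally connected continuum — and every non-degenerate locally connected continuum is decomposable, hence carries a proper subcontinuum with non-void interior, contradicting indecomposability. Thus both summands are proper closed connected sets, so both are nowhere dense, and their union $X$ is nowhere dense, which is impossible since $X$ has non-empty interior in itself.

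For the line $\ell=(-\infty,\infty)$, ``not recurrent'' means $f$ is neither positively nor negatively recurrent, so there are $p,k<\omega$ with $\overline{f[p,\infty)}\ne X$ and $\overline{f(-\infty,-k]}\ne X$. Taking $n=\max(p,k)$ I get $\overline{f[n,\infty)}\subsetneq X$ and $\overline{f(-\infty,-n]}\subsetneq X$, and then
\[ X=\overline{f(-\infty,-n]}\cup f[-n,n]\cup\overline{f[n,\infty)}. \]
As before each of the three pieces is closed and connected; the two tails are proper by construction, and the middle continuum $f[-n,n]$ is proper by the same Peano-continuum argument. Indecomposability forces all three to be nowhere dense, so their union cannot be all of $X$ — the desired contradiction.

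I expect the only real subtlety to lie in two places. The first is recognizing that for the two-ended line a single cut $X=\overline{f(-\infty,t]}\cup\overline{f[t,\infty)}$ is useless: indecomposability merely forces one half to be dense, which is entirely compatible with non-recurrence, so one must insert the compact middle arc and argue with \emph{three} nowhere dense pieces rather than two. The second, and the genuine obstacle, is showing the middle piece is proper, i.e.\ that $X$ is not itself a Peano continuum; this is exactly where non-degeneracy enters, through the classical fact that a non-degenerate locally connected continuum is decomposable. Everything else reduces to the elementary stability of nowhere density under finite unions.
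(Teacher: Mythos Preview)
Your proof is correct and follows essentially the same route as the paper: contraposition, the decomposition $X=f[0,n]\cup\overline{f[n,\infty)}$ (respectively the three-piece version for $\ell=(-\infty,\infty)$), and the Hahn--Mazurkiewicz/Peano argument to handle the case where the compact middle arc is all of $X$. The only cosmetic difference is that in Case~2 the paper singles out whichever of the three pieces has interior and concludes decomposability directly, whereas you uniformly invoke ``proper closed connected $\Rightarrow$ nowhere dense'' for all three and finish with the finite-union-of-nowhere-dense observation---a slightly tidier packaging of the same idea.
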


\begin{proof}By contraposition. Suppose $f$ is not recurrent. 

 \textit{Case 1:} $\ell=[0,\infty)$.   Then there exists $n<\omega$ such that  $\overline{f[n,\infty)}\neq X$. If $f[0,n]\neq X$ then $X$ is the union of the two proper closed connected sets $f[0,n]$ and $\overline{f[n,\infty)}$, whence  $X$ is decomposable. On the other hand, if $f[0,n]=X$ then $X$ is locally connected by the Hahn-Mazurkiewicz Theorem.  Then either $X$ is either degenerate  or  decomposable.

 \textit{Case 2:} $\ell=(-\infty,\infty)$. Then there exists $n<\omega$ such that $\overline{(-\infty,-n]}\neq X\neq \overline{[n,\infty)}$. The goal is to show $X$ is decomposable, so we may assume each of the connected sets $\overline{f(-\infty,-n]}$ and $\overline{f[n,\infty)}$ is nowhere dense in $X$.  Then $f[-n,n]$ has non-void interior in $X$, so if $f[-n,n]\neq X$ then $X$ is automatically decomposable.  Otherwise  $X$ is  locally connected and $X$ is either degenerate or decomposable. \end{proof}
 


 \begin{up}\label{b}Suppose $f$ is one-to-one and  $Y$ is a continuum of which $X$ is a composant. Then $f$ is recurrent if and only if $Y$ is indecomposable.\end{up}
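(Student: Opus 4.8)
The plan is to prove the two implications separately, reducing each to one of the two preceding Propositions by passing between $X$ and its ambient continuum $Y$ through closures and interiors. Throughout I will use that $X$ is non-degenerate, being a one-to-one image of an infinite line.

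For the direction $Y$ indecomposable $\Rightarrow f$ recurrent, I would first show that $X$ is itself indecomposable in the sense defined for connected spaces, and then invoke the preceding Proposition that a non-degenerate indecomposable image is recurrent. To see $X$ is indecomposable, suppose toward a contradiction that $X = H \cup K$ with $H,K$ proper subsets of $X$ that are closed in $X$ and connected. Since a composant of an indecomposable continuum is dense, $\overline{X}^Y = Y$, so $Y = \overline{H}^Y \cup \overline{K}^Y$ exhibits $Y$ as a union of two subcontinua; indecomposability of $Y$ forces one of them, say $\overline{H}^Y$, to equal $Y$. But $H$ is closed in $X$, so $H = \overline{H}^Y \cap X = Y \cap X = X$, contradicting $H \subsetneq X$. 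Hence no such decomposition of $X$ exists.

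For the converse I would prove the contrapositive: if $Y$ is decomposable then $f$ is not recurrent. Write $Y = H \cup K$ with $H,K$ proper subcontinua. The key observation is that both pieces have nonempty interior: were $K$ nowhere dense, then $H$, being closed and containing the dense set $Y \setminus K$, would equal $Y$. Letting $x$ be a base point for which $X$ is the composant of $x$, we have $x \in H$ or $x \in K$; say $x \in H$. Then $H$ is a proper subcontinuum containing $x$, so $H \subseteq X$, and therefore $X$ contains the nonempty set $U := \operatorname{int}_Y H$, which is open in $Y$ and hence open in $X$. Being an open subspace of the completely metrizable space $Y$, the set $U$ is itself completely metrizable, hence a nonempty Baire space and so not of the first category in itself. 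If $f$ were recurrent, the preceding Proposition that recurrence implies first category would make $X$ of the first category in itself, forcing its open subspace $U$ to be of the first category in itself as well --- a contradiction. Thus $f$ is not recurrent.

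The routine verifications (that closure in $X$ is closure in $Y$ intersected with $X$, that an open subspace of a first-category space is first category, and non-degeneracy of $X$) I would leave to the reader. The one external input, and the step I expect to be the only real obstacle, is the classical fact that every composant of an indecomposable continuum is dense in it; once this is granted, both implications follow formally from the two Propositions together with the Baire category theorem.
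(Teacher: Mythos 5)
Your proof is correct and follows essentially the same route as the paper: one direction via density of composants plus Proposition 2, the other via the non-void interior of a composant of a decomposable continuum, Proposition 1, and the Baire category theorem. You merely spell out details (the density argument for indecomposability of $X$, and why a piece of a decomposition has interior) that the paper cites as known.
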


\begin{proof}  In a decomposable continuum every composant has non-void  interior.  So by Proposition 1 and the Baire Category Theorem,  if $f$ is recurrent then it must be that $Y$ is indecomposable.  Conversely, if $Y$ is indecomposable then $X$ is indecomposable by  $\overline{\overline X} =Y$ (composants are dense).  Then $f$ is recurrent by Proposition 2.   \end{proof}


\section{Proof of Theorem I}

Let $f:\ell\to X$ be a one-to-one mapping of $\ell\in \{[0,\infty),(-\infty,\infty)\}$ onto  $X$. 
 \smallskip

\noindent \textit{Any of (A) through (C) is sufficient:}
 \smallskip
 
 If (A), then $X$ is a decomposable continuum (Propositions 1 \& 2) with composant equal to itself.  If (B),  then $X$ is the composant of $f(0)$ in the one-point compactification of $X$,  just as $[0,1)$ is the composant of $0$ in the unit interval $[0,1]$.\footnote{Moreover, if $Y$ is any compactification of $X\simeq [0,\infty)$, then  $X$ is the composant of $f(0)$ in $Y$.} If (C), and $Y$ is an indecomposable continuum of which $f[s,\infty)$ is a composant, then $X$ is the composant of $f(0)$ in the decomposable continuum $f[0,s) \cup Y$.

\medskip

\noindent\textit{One of (A) through (C) is necessary:}
 \smallskip
 
 Suppose $Y$ is a decomposable continuum of which $X$ is a composant. 
 \smallskip

 \textit{Case 1:}  $\ell=(-\infty,\infty)$. 
  \smallskip
  
 We show (A). Well, suppose for a contradiction $X$ is non-compact. Then at least one of $\overline{f(-\infty,0]}$ and  $\overline{f[0,\infty)}$ is non-compact.  Without loss of generality,  $\overline{f[0,\infty)}$ is non-compact.  Let $r\in(-\infty,\infty)$ be such that $X$ is the composant of $f(r)$, and let $n<\omega$.  We have $\overline {f[r,\infty)}=X$ by maximality of $X$, so $f(-\infty,r)\subseteq \overline{f[n,\infty)}$. Then $f(r)\in \overline{f[n,\infty)}$.  As before,   $\overline{f[n,\infty)}=X$. Since $n<\omega$ was arbitrary, $f$ is (positively-)recurrent. By Proposition 3, this contradicts decomposability of $Y$. 

 \smallskip
 \textit{Case 2:}  $\ell=[0,\infty)$.  
  \smallskip
  
 Suppose  neither (A) nor (B) holds. We show (C). 

\begin{ucl}$X$ is the composant of $f(0)$ in $Y$. \end{ucl}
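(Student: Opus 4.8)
The plan is to prove the set equality by establishing the two inclusions separately, after recording what we already know. Since $X$ is a composant of $Y$, there is some $a\in[0,\infty)$ for which $X$ equals the composant of $f(a)$; and since (A) fails, $X$ is noncompact, so $X\subsetneq Y$ while $\overline{X}=Y$ because composants are dense in a continuum.

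First I would dispatch the easy inclusion, $X\subseteq$ (composant of $f(0)$). Each $f[0,n]$ is a subcontinuum of $Y$ containing $f(0)$, and it is \emph{proper} because $f[0,n]\subseteq X\subsetneq Y$. Hence each $f[0,n]$ lies in the composant of $f(0)$, and taking the union over $n<\omega$ gives $X=\bigcup_{n<\omega}f[0,n]\subseteq$ (composant of $f(0)$).

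The substance is the reverse inclusion. I would take an arbitrary proper subcontinuum $K\subsetneq Y$ with $f(0)\in K$ and aim to show $K\subseteq X$. The natural move is to splice $K$ onto $f(a)$ via the arc $f[0,a]$: the set $K\cup f[0,a]$ is a subcontinuum (the two pieces share $f(0)$) containing $f(a)$. If $K\cup f[0,a]\neq Y$, then it is a \emph{proper} subcontinuum through $f(a)$, so it lies in the composant of $f(a)=X$; in particular $K\subseteq X$, as wanted.

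The main obstacle is the degenerate case $K\cup f[0,a]=Y$, where the splicing trick fails to yield a proper set, and here I expect to spend the real effort. I would argue by contradiction using $\overline{f[a,\infty)}$. From $K\cup f[0,a]=Y$ we get $Y\setminus f[0,a]\subseteq K$, hence $f(a,\infty)\subseteq K$, and since $K$ is closed, $\overline{f[a,\infty)}\subseteq K\subsetneq Y$. Thus $\overline{f[a,\infty)}$ is a proper subcontinuum of $Y$ containing $f(a)$, so it must lie in the composant of $f(a)$, i.e. $\overline{f[a,\infty)}\subseteq X$. On the other hand, $\overline{X}=Y$ together with closedness of $f[0,a]$ gives $Y=f[0,a]\cup\overline{f[a,\infty)}$, whence $Y\setminus f[0,a]\subseteq\overline{f[a,\infty)}\subseteq X$. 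Combining the two, $Y\setminus X\subseteq f[0,a]\subseteq X$, which forces $Y=X$ and contradicts the noncompactness of $X$. Therefore the degenerate case cannot occur, every proper subcontinuum $K\ni f(0)$ satisfies $K\subseteq X$, and the reverse inclusion follows, completing the proof of the claim.
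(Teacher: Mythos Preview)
Your proof is correct and follows essentially the same architecture as the paper's: both establish the two inclusions, and for the reverse inclusion both splice an arbitrary proper subcontinuum $K\ni f(0)$ with the arc $f[0,a]$ and split into cases according to whether $K\cup f[0,a]=Y$. The only difference is in the degenerate case $K\cup f[0,a]=Y$: you work harder than necessary, passing through $\overline{f[a,\infty)}$ to derive a contradiction, whereas the paper simply observes that $f(a,\infty)\subseteq K$ and $K$ closed force $f(a)\in K$, so $K$ itself is a proper subcontinuum through $f(a)$ and hence $K\subseteq X$ directly---no contradiction needed (and indeed your argument shows, as a by-product, that this case cannot actually occur).
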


\begin{proof}[Poof of Claim 3.1]Let $t\in [0,\infty)$ be such that $X$ is the composant of $f(t)$ in $Y$.  Let $P$ be the composant of $f(0)$ in $Y$. Apparently $X\subseteq P$ because for each $x\in X$   the arc $f[0,f^{-1}(x)]$ is a proper subset of $Y$ (by $\neg$(A)). Now let  $y\in P$.  There is a continuum $K\subsetneq Y$ with $\{f(0),y\}\subseteq K$.  If $f[0,t]\cup K\neq Y$, then clearly $y\in X$. If $f[0,t]\cup K=Y$, then $f(t,\infty)\subseteq K$, whence $f(t)\in K$.  Again $y\in X$. Thus $P\subseteq X$. Combining both inclusions, we have $P=X$.\end{proof}

\begin{ucl}There exists $t>0$ such that $f[0,t)$ is open in $X$. \end{ucl}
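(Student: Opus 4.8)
The plan is to recast the conclusion ``$f[0,t)$ is open in $X$'' as the equivalent statement ``$f[t,\infty)$ is closed in $X$'', and to locate a suitable $t$ by examining where the tail $f[N,\infty)$ first accumulates. Since $Y$ is decomposable, Proposition 3 tells us that $f$ is not recurrent, so there is some $N<\omega$ with $\overline{f[N,\infty)}\neq X$. I would fix such an $N$ and set $B:=\operatorname{cl}_Y f[N,\infty)$, then record the easy structural facts: $B$ is a subcontinuum of $Y$ (it is the $Y$-closure of the connected set $f[N,\infty)$); it is \emph{proper}, because $B\cap X=\overline{f[N,\infty)}\neq X$ while $X$ is dense in $Y$; and $Y=f[0,N]\cup B$, because the right-hand side is closed and contains the dense set $X$.

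The heart of the argument is the claim that $f(0)\notin B$, and this is where decomposability and the composant hypothesis interact. Suppose instead $f(0)\in B$. Recall from Claim~3.1 that $X$ is the composant of $f(0)$. Then $B$ is a proper subcontinuum of $Y$ containing $f(0)$, so by the very definition of the composant we would have $B\subseteq X$; since also $f[0,N]\subseteq X$, this gives $Y=f[0,N]\cup B\subseteq X$. But $X$ is non-compact by $\neg$(A), hence $X\subsetneq Y$, and we reach a contradiction. With $f(0)\notin B$ established, I would use that $f|_{[0,N]}$ is a homeomorphism onto the arc $f[0,N]$ to observe that $\{s\in[0,N]:f(s)\in B\}$ is a closed subset of $[0,N]$ containing $N$ but not $0$; letting $a$ be its minimum, we get $a>0$ and $f[0,a)\cap B=\varnothing$. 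The desired value will be $t=a$.

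It then remains to verify that $f[a,\infty)$ is closed in $X$, which I expect to be routine bookkeeping with closures of compact arcs: since $f[a,N]$ is compact (hence closed in $X$), one computes $\overline{f[a,\infty)}=f[a,N]\cup\overline{f[N,\infty)}=f[a,N]\cup(B\cap X)$, and because $f[0,a)$ misses $B$ every point of $B\cap X$ has $f$-preimage at least $a$, so this closure is contained in $f[a,\infty)$. The only genuinely delicate point is the claim $f(0)\notin B$; everything before and after it is either a density/closedness bookkeeping step or a direct appeal to Proposition~3, so I anticipate that isolating the right proper subcontinuum $B$ and pushing it inside the composant is the crux of the proof.
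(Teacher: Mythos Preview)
Your argument is correct and follows essentially the same route as the paper: both proofs hinge on showing that $f(0)$ lies outside the $Y$-closure of some tail $f[N,\infty)$, using non-recurrence (Proposition~3), Claim~3.1, and $\neg$(A). The paper phrases the contradiction as ``$\overline{\overline{f[n,\infty)}}$ is a proper subcontinuum containing $f(0)$ and meeting $Y\setminus X$, violating maximality of the composant'', while you phrase it as ``$B\subseteq X$ forces $Y\subseteq X$''; these are two readings of the same incompatibility. Your version is more explicit in one respect: the paper compresses the passage from ``$f(0)\notin\overline{f[n,\infty)}$ for some $n$'' to ``$f[0,t)$ is open for some $t$'' into the phrase ``For otherwise'', whereas you spell out the extraction of $t=a=\min\{s\in[0,N]:f(s)\in B\}$ and verify that $f[a,\infty)$ is closed.
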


\begin{proof}[Poof of Claim 3.2]For otherwise $f(0)\in \overline{f[n,\infty)}$ for every $n<\omega$.  Let $n<\omega$ such that $\overline{f[n,\infty)}\neq X$ (Proposition 3). Then $\overline{\overline{f[n,\infty)}}$ is a proper subcontinuum of $Y$ containing $f(0)$ and meeting $Y\setminus X$ (by $\neg$(A)).  In light of Claim 3.1, this contradicts maximality of $X$.\end{proof}

Let $s=\sup\{t\in (0,\infty):f[0,t)\text{ is open in }X\}$. Observe that $s>0$ by Claim 3.2, and $s<\infty$ by $\neg$(B).  Also,  $f[0,s)=\bigcup\{f[0,t):t\in (0,\infty)\text{ and }f[0,t)\text{ is open in }X\}$ is open in $X$, and  $f[s,\infty)$ is the composant of $f(s)$ in $\overline{\overline{f[s,\infty)}}$.  

\begin{ucl}  $\overline{\overline{f[s,\infty)}}$ is indecomposable. \end{ucl}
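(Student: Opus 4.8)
The plan is to transfer everything to the subcontinuum $Z:=\overline{\overline{f[s,\infty)}}$ and invoke Proposition 3. Reparametrising the half-line, the restriction $g:=f|_{[s,\infty)}$ is a one-to-one map onto $\ran(g)=f[s,\infty)$, and we have already observed that $f[s,\infty)$ is the composant of $f(s)$ in $Z$. Proposition 3 then collapses the claim to a single statement: $Z$ is indecomposable if and only if $g$ is recurrent. So it suffices to show that $g$ is recurrent. Unwinding the definition, and using that $f[s,\infty)$ is closed in $X$ (because $f[0,s)$ is open), $g$ is recurrent exactly when $\overline{\overline{f[t,\infty)}}=Z$ for every $t>s$; that is, when every tail $f[t,\infty)$ is dense in $Z$.

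First I would record two bookkeeping facts. Writing $W_t:=\overline{\overline{f[t,\infty)}}$, one has $Z=f[s,t]\cup W_t$ for every $t\ge s$, since $f[s,t]$ is compact and $f[s,t]\cup f[t,\infty)=f[s,\infty)$ is dense in $Z$. Moreover, for $t>s$ the open set $f[0,s)$ is disjoint from $f[t,\infty)$, so $W_t\cap f[0,s)=\varnothing$; hence $f[t,\infty)$ is closed in $X$ precisely when $W_t\cap f[s,t)=\varnothing$. In these terms the maximality of $s$ (no $f[0,t)$ with $t>s$ is open in $X$) says exactly that for every $t>s$ there is a $v\in[s,t)$ with $f(v)\in W_t$.

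Now fix $t>s$ and suppose, for contradiction, that $W_t\subsetneq Z$; equivalently the closed set $B:=\{u\in[s,t]:f(u)\in W_t\}$ and the relatively open set $N:=[s,t]\setminus B$ are both nonempty, with $t\in B$. I would split on the relative position of $B$ and $N$. If some $v\in B$ precedes some $u\in N$ (the arc re-enters the complement of $W_t$ after touching $W_t$), then $f(v,t]\not\subseteq W_t$, so $W_t\cup f[s,v]$ is a \emph{proper} subcontinuum of $Z$ containing $f(s)$; being a proper subcontinuum through $f(s)$, it lies in the composant $f[s,\infty)$, so $W_t\subseteq f[s,\infty)$. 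Then $Z=f[s,t]\cup W_t\subseteq f[s,\infty)$, forcing $Z=f[s,\infty)$ to be closed in $Y$, whence $Y=\overline{\overline{X}}=\overline{\overline{f[0,s)}}\cup\overline{\overline{f[s,\infty)}}=f[0,s]\cup f[s,\infty)=X$ is compact, contradicting $\neg$(A). If instead every point of $B$ exceeds every point of $N$, set $c:=\inf B$; then $c>s$ (as $N\subseteq[s,c)$ is nonempty), while $f[s,c)\cap W_t=\varnothing$ and $f[c,t]\subseteq W_t$, so $W_c=W_t$ and $W_c\cap f[0,c)=\varnothing$. Thus $f[c,\infty)$ is closed in $X$ with $c>s$, contradicting the maximality of $s$.

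In either branch $W_t\subsetneq Z$ is impossible, so $W_t=Z$ for all $t>s$; hence $g$ is recurrent and Proposition 3 gives that $\overline{\overline{f[s,\infty)}}$ is indecomposable. The genuinely delicate point — and the step I expect to be the main obstacle — is the dichotomy in the third paragraph: maximality of $s$ only guarantees that each tail's closure reaches back to \emph{some} point of $f[s,t)$, and the real work is upgrading this single recurrence point to full density $W_t=Z$, which requires controlling how the arc $f[s,t]$ weaves in and out of $W_t$ and is precisely where decomposability of $Y$ (through $Y=X$) and the maximality of $s$ get used.
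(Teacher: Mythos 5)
Your proof is correct and follows essentially the same route as the paper: both reduce the claim via Proposition 3 to recurrence of $f\restriction [s,\infty)$ and then derive a contradiction from non-recurrence using the definition of $s$ together with composant maximality. Your case (b) is precisely the argument behind the paper's assertion that ``by definition of $s$ we have $f(s)\in\overline{f[s+m,\infty)}$,'' and your case (a) is the paper's ensuing maximality violation, merely routed through the composant of $f(s)$ in $\overline{\overline{f[s,\infty)}}$ instead of the composant of $f(0)$ in $Y$.
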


\begin{proof}[Poof of Claim 3.3]By Proposition 3, it suffices to show $f\restriction [s,\infty)$ is recurrent. Suppose to the contrary that  there exists $m<\omega$ such that $f[s+m,\infty)$ is not dense in $f[s,\infty)$. Then $f(s,s+m)$ has non-empty interior in $Y$. By definition of $s$ we have $s\in \overline{f[s+m,\infty)}$. Then $\overline{\overline{f[s+m,\infty)}}$ is a proper subcontinuum of $Y$ containing $f(s)$ and meeting $Y\setminus X$ (by $\neg$(A)).  Then $f[0,s]\cup \overline{\overline{f[s+m,\infty)}}$ is a proper subcontinuum of $Y$ that violates maximality of $X$. \end{proof}

 This concludes the proof of Theorem I.

\section{Proof of Theorem II (Necessity)}
\noindent 

\noindent Suppose $Y$ is an indecomposable continuum of which $X$ is a composant. Then (1) is true by Proposition 3.  Towards proving (2), let $(A_n)\in [K(X)]^\omega$ such that $c(X)\cap \prod \{A_n:n<\omega\}\neq\varnothing$. Let $x$ be the limit point of an element of $c(X)\cap \prod \{A_n:n<\omega\}$. Supposing $\overline{\bigcup \{A_n:n<\omega\}}\notin K(X)$, there exists $y\in\overline{\overline{\bigcup \{A_n:n<\omega\}}}\setminus X$.  Let $K$ be the component of $y$ in $\overline{\overline{\bigcup \{A_n:n<\omega\}}}$.  Observe that $x\in K$.   The composants of an indecomposable continuum are pairwise disjoint, so $X$ is the composant of $x$ in $Y$. Thus  $K=Y$.  It follows that $\overline{\bigcup \{A_n:n<\omega\}}=X$.



\section{Arcs in $\ran(f)$}

Before proving the opposite direction in Theorem II, we need two more propositions (this section) and a key lemma (next section).  

Proposition 4 is  classic.  

\begin{up}\label{c}Let $f:[0,\infty)\to X$ be a continuous bijection onto a non-compact space $X$. For any continuum $K\in K(X)$ there are two numbers $a,b\in [0,\infty)$ such that $a\leq b$ and  $f[a,b]=K$.  In particular, $K$ is a point or an arc.\end{up}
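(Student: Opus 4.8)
The plan is to reduce the whole statement to one assertion, namely that $S:=f^{-1}(K)$ is bounded. First I would record the routine fact that for each $n$ the restriction $f\restriction[0,n]$ is a continuous bijection of the compact interval $[0,n]$ onto $f[0,n]$, hence a homeomorphism; consequently each $f[0,n]$ is compact and therefore closed in $X$, while $f(n,\infty)=X\setminus f[0,n]$ is open. In particular $S$ is closed in $[0,\infty)$, being the preimage of the closed set $K$.

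Next I would carry out the reduction. If $S$ is bounded, say $S\subseteq[0,N]$, then $S$ is compact, so $f\restriction S$ (a restriction of the homeomorphism $f\restriction[0,N]$) is a homeomorphism of $S$ onto $K$. Thus $S$ is homeomorphic to $K$; since $K$ is connected, so is $S$, and a connected compact subset of $\mathbb R$ is a point or a closed interval $[a,b]$. In either case $K=f[a,b]$ is a point or an arc, which is exactly the conclusion. So the entire proposition rests on proving $\sup S<\infty$, and this is the step I expect to be the main obstacle. It is worth flagging why both hypotheses are indispensable here: the standard winding of $[0,\infty)$ onto a circle is a continuous bijection whose \emph{full} preimage is unbounded, so non-compactness of $X$ is essential; and a convergent sequence together with its limit is a compact set with unbounded preimage, so connectedness of $K$ is essential.

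For boundedness I would argue as follows. Put $a=\min S$ and $b=\sup\{t:[a,t]\subseteq S\}$. One first shows $b<\infty$: otherwise $f[a,\infty)\subseteq K$, and then $X=f[0,a]\cup f[a,\infty)\subseteq f[0,a]\cup K$ would be compact, contrary to hypothesis; hence the initial run $f[a,b]$ is a genuine arc sitting inside $K$. If $S$ were unbounded it could not be a single interval, so the components of the closed set $S$ would partition $K$ into at least two pairwise disjoint compact pieces of the form $f[I]$ (each component $I$ being a compact interval or a point). If one could show these pieces are closed in $K$ and reduce to countably many of them, Sierpi\'nski's theorem—that a continuum is never the union of two or more pairwise disjoint nonempty closed sets—would give the contradiction. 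The delicate point, and where I would concentrate the real work, is precisely that a priori late parameter values may accumulate onto earlier pieces (which is exactly how connectedness is maintained in the compact circle example), so the pieces need not be closed and there may be too many of them. Ruling this accumulation out is where non-compactness of $X$ must be fed back in, and it is the heart of the argument.
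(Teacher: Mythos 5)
Your reduction is sound and your instinct that Sierpi\'nski's theorem is the right tool is exactly the paper's, but you have left the one step that carries all the weight unproven, and the route you sketch for it (components of $S=f^{-1}[K]$) does not work as stated. Each component of $S$ is a point or a compact interval (an unbounded component $[c,\infty)$ would make $X=f[0,c]\cup K$ compact), so the pieces $f[I]$ are automatically closed in $K$; that is not the real obstacle. The obstacle is countability: a closed subset of $[0,\infty)$ can have uncountably many components (a Cantor set, say), and Sierpi\'nski's theorem fails for uncountable disjoint closed covers ($[0,1]$ is the union of its singletons). So the decomposition of $K$ into images of components of $S$ cannot be fed to Sierpi\'nski.

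The paper's fix is a regrouping trick that your sketch is missing. Since no tail of $[0,\infty)$ maps into $K$ (your own $b<\infty$ argument, applied to every tail), there is an unbounded sequence $r_0<r_1<\cdots$ in $(0,\infty)\setminus S$. Because $S$ is closed, each $r_k$ lies in an open gap of $S$, so one can choose $0=a_0<b_0<r_0<a_1<b_1<r_1<\cdots$ with $(b_k,a_{k+1})\cap S=\varnothing$. Then the countably many pairwise disjoint compacta $K\cap f[a_k,b_k]$ cover $K$; Sierpi\'nski forces all but one to be empty, so $S\subseteq [a_{k^*},b_{k^*}]$ is bounded, after which your reduction finishes the proof. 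The point is that each $[a_k,b_k]$ may swallow many (even uncountably many) components of $S$ at once, which is precisely what restores countability; ``ruling out accumulation,'' as you propose, is not actually needed, and non-compactness enters only through the existence of the unbounded sequence $(r_k)$.
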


\begin{proof}No tail of $[0,\infty)$ maps into $K$.  For if $f[n,\infty)\subseteq K$, then $X=f[0,n]\cup K$ is compact. So there is an unbounded sequence of numbers $r_0<r_1<...$ in $(0,\infty)\setminus f^{-1}[K]$. Since $[0,\infty)\setminus f^{-1}[K]$ is open, there are two additional sequences $(a_k)$ and $(b_k)$ such that $0=a_0<b_k<r_k<a_{k+1}<b_{k+1}$ for each $k<\omega$ and $(b_k,a_{k+1})\cap f^{-1}[K]=\varnothing$. Then $K$ is covered by the  disjoint compacta $f[a_k,b_k]$, $k<\omega$. By Sierp\-i\'n\-ski's Theorem (stated in Section 6), there exists $k^*$ such that $K\subseteq f[a_{k^*},b_{k^*}]$. Since $f\restriction [a_{k^*},b_{k^*}]$ is a homeomorphism, we can find the desired numbers $a\leq b$ in  $[a_{k^*},b_{k^*}]$. If $a=b$ then $K$ is a point; otherwise $K$ is an arc.
\end{proof}

By nearly identical arguments, Proposition 4 is also true when $\ell=(-\infty,\infty)$ and $f$ is bi-recurrent.

\begin{ur}Let  $f:\ell\to X$ be a continuous bijection onto a composant space $X$, where $\ell\in \{[0,\infty),(-\infty,\infty)\}$.  If $\ell=[0,\infty)$ and $f$ is recurrent, or $\ell=(-\infty,\infty)$ and $f$ is bi-recurrent,  then the following are true.  By the composant property every proper closed connected subset of $X$ is compact. So by Proposition 4, $f$ is confluent and every non-degenerate proper closed connected subset of $X$ is an arc.  In particular, $X$ is hereditarily unicoherent.   Since neither end of $X$ terminates to form a circle, $X$ is also uniquely arcwise-connected.\end{ur}


A mapping $f$ of the line or half-line $\ell$ is \textit{arc-complete} provided for every three sequences $a,b,c\in \ell^{\hspace{.5mm}\omega}$ such that $a_n<b_n$ and  $c_n\in [a_n, b_n]$, if $f(c)$ converges in $X:=\ran(f)$ 
then $\overline{\bigcup\{f[a_n,b_n]:n<\omega\}}$ is compact or equal to $X$.

We now give a subsequence criterion for arc-completeness. The topology on $K(X)$ is the Vietoris topology (equals the  topology generated by a Hausdorff metric).

\begin{up}\label{d}Let $f:[0,\infty)\to X$ be a continuous bijection. The following are equivalent.
\begin{enumerate}[label=\textnormal{(\roman*)}]
\item $f$ is arc-complete;

\item  condition (2) in Theorem II;

\item  for every sequence of arcs $(A_n)\in [K(X)]^\omega$, if $c(X)\cap \prod\{A_n:n<\omega\}\neq\varnothing$ and $\overline{\bigcup\{A_n:n<\omega\}}\neq X$ then a subsequence of $(A_n)$ converges in $K(X)$.  
 \end{enumerate}
\end{up}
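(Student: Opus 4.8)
The plan is to move freely between arcs in $X$ and closed subintervals of $[0,\infty)$ by means of Proposition 4, and then to lean on compactness of the hyperspace of a compact space. We may assume $X$ is non-compact (otherwise all three conditions hold vacuously, since $K(X)=K(X)\cup\{X\}$ is compact). Proposition 4 says every continuum in $K(X)$ is of the form $f[a,b]$ with $a\leq b$; in particular the arcs in $X$ are exactly the sets $f[a,b]$ with $a<b$, and conversely each such $f[a,b]$ is an arc because $f$ restricted to the compact interval $[a,b]$ is a continuous injection, hence a homeomorphism. With this dictionary the equivalence (i)$\Leftrightarrow$(ii) is a direct translation: writing an arc $A_n$ as $f[a_n,b_n]$, an element of $c(X)\cap\prod\{A_n:n<\omega\}$ is precisely a sequence $(f(c_n))$ with $c_n\in[a_n,b_n]$ converging in $X$, and $\overline{\bigcup\{A_n:n<\omega\}}=\overline{\bigcup\{f[a_n,b_n]:n<\omega\}}$. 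So the hypothesis and conclusion of arc-completeness match, term for term, those of condition (2).

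For (ii)$\Rightarrow$(iii) I would argue as follows. Let $(A_n)$ be a sequence of arcs with $c(X)\cap\prod\{A_n:n<\omega\}\neq\varnothing$ and $\overline{\bigcup\{A_n:n<\omega\}}\neq X$. By (ii) the set $C:=\overline{\bigcup\{A_n:n<\omega\}}$ lies in $K(X)$, hence is compact. Each $A_n$ is then a compact subset of $C$, i.e.\ $A_n\in K(C)$, and $K(C)$ is compact in the Vietoris (equivalently Hausdorff-metric) topology. Since $C\subseteq X$ is closed, $K(C)$ embeds in $K(X)$, so the subsequence of $(A_n)$ extracted from compactness of $K(C)$ converges in $K(X)$. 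This is exactly (iii).

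The substantive direction is (iii)$\Rightarrow$(ii), which I would prove by contradiction. Suppose $(A_n)$ satisfies $c(X)\cap\prod\{A_n:n<\omega\}\neq\varnothing$ while $\overline{\bigcup\{A_n:n<\omega\}}$ is neither compact nor equal to $X$. Fix $(p_n)\in c(X)\cap\prod\{A_n:n<\omega\}$ with $p_n\to x\in X$. Non-compactness of the closed set $\overline{\bigcup\{A_n:n<\omega\}}$ produces a sequence $(y_j)$ in $\bigcup\{A_n:n<\omega\}$ with no convergent subsequence in $X$; choose $A_{n(j)}$ with $y_j\in A_{n(j)}$. Because each arc $A_n$ is compact it can contain only finitely many $y_j$ (else $(y_j)$ would have a convergent subsequence), so after passing to a subsequence we may assume $n(j)\to\infty$. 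Writing $A_{n(j)}=f[a_{n(j)},b_{n(j)}]$, $p_{n(j)}=f(t_j)$ and $y_j=f(u_j)$ with $t_j,u_j\in[a_{n(j)},b_{n(j)}]$, I set $B_j:=f[\min(t_j,u_j),\max(t_j,u_j)]$, the subarc of $A_{n(j)}$ joining $p_{n(j)}$ to $y_j$. For large $j$ these are genuine arcs, since $y_j$ escapes every compact set while $p_{n(j)}\to x$, forcing $p_{n(j)}\neq y_j$.

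The sequence $(B_j)$ then contradicts (iii). Indeed $c(X)\cap\prod\{B_j:j<\omega\}\neq\varnothing$ via the anchor sequence $(p_{n(j)})$, which converges to $x$ because $n(j)\to\infty$; and $\overline{\bigcup\{B_j:j<\omega\}}\subseteq\overline{\bigcup\{A_n:n<\omega\}}\neq X$. Yet no subsequence of $(B_j)$ converges in $K(X)$: if $B_{j_i}\to B$ with $B$ compact, then Hausdorff-metric convergence gives $d(y_{j_i},B)\to 0$, and compactness of $B$ then extracts a convergent subsequence of $(y_{j_i})$ — impossible, since $(y_j)$ has no convergent subsequence. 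Hence $\overline{\bigcup\{A_n:n<\omega\}}\in K(X)\cup\{X\}$, which is (ii). The main obstacle, and the step needing the most care, is exactly this construction of $(B_j)$: one must trap each escaping point $y_j$ together with a convergent anchor inside a single subarc, so that the new sequence inherits the convergent-point-sequence hypothesis of (iii) while its arcs are forced to run off to infinity. The only tool beyond Proposition 4 is the standard fact that Vietoris convergence of compacta to a compact limit controls the limit points of any selection of points drawn from them.
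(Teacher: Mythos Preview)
Your proof is correct and follows the same overall strategy as the paper: Proposition~4 handles (i)$\Leftrightarrow$(ii), compactness of the hyperspace of a compactum gives (ii)$\Rightarrow$(iii), and for (iii)$\Rightarrow$(ii) one extracts from a non-compact union a sequence of arcs that must subconverge in $K(X)$, forcing the escaping points to accumulate. The differences are in execution. The paper fixes once and for all a metric compactification $Y\supseteq X$ and works in $K(Y)$; you work intrinsically, using $K(C)$ for (ii)$\Rightarrow$(iii) and sequential non-compactness for (iii)$\Rightarrow$(ii). Your intrinsic route is arguably cleaner. On the other hand, your subarc construction $B_j\subseteq A_{n(j)}$ is unnecessary: once you have $n(j)\to\infty$, the anchor subsequence $(p_{n(j)})$ already witnesses $c(X)\cap\prod A_{n(j)}\neq\varnothing$, so you can apply (iii) directly to $(A_{n(j)})$ and reach the same contradiction via $y_j\in A_{n(j)}$. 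The paper does exactly this, applying (iii) to the full arcs rather than subarcs; what you flag as the ``main obstacle'' is in fact avoidable.
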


\begin{proof}(i)$\Leftrightarrow$(ii) is an immediate consequence of Proposition 4. 

 We will now prove (ii)$\Leftrightarrow$(iii).  For this purpose, let $Y$ be any metric compactum in which $X$ is densely embedded.

Suppose (ii). Let $(A_n)\in [K(X)]^\omega$ be such that $c(X)\cap \prod\{A_n:n<\omega\}\neq\varnothing$ and $\overline{\bigcup\{A_n:n<\omega\}}\neq X$. Then  $\overline {\bigcup \{A_{n}:n<\omega\}}$ is compact by hypothesis. By compactness of $K(Y)$, a  subsequence $(A_{n_k})$ converges to a point $K\in K(Y)$. Then $$K\subseteq \overline{\overline{\bigcup\{A_{n_k}:k<\omega\}}}= \overline{\bigcup\{A_{n_k}:k<\omega\}},$$ whence $K\in K(X)$.  This proves (iii).

Conversely, suppose (iii). Let $(A_n)\in [K(X)]^\omega$ be such that $c(X)\cap \prod\{A_n:n<\omega\}\neq\varnothing$ and $\overline{\bigcup\{A_n:n<\omega\}}\notin\{X\}$. We show $\overline{\bigcup\{A_n:n<\omega\}}\in K(X)$.  To that end, let $y\in \overline{\overline{\bigcup\{A_n:n<\omega\}}}$ and show $y\in X$.  There exists $(y_k)\in[\bigcup\{A_n:n<\omega\}]^\omega$ such that $y_k\to y$. For each $k<\omega$, let $n_k$ be such that $y_k\in A_{n_k}$. By compactness of each $A_n$, we may assume that $\{n_k:k<\omega\}$ is infinite and in strictly increasing order.  Applying the hypothesis to $(A_{n_k})$, we find that a subsequence of $(A_{n_k})$ converges to a point  $K\in K(X)$. Then $y\in K\subseteq X$.  Since  $y$ was arbitrary, we have  $$\overline{\bigcup\{A_n:n<\omega\}}=\overline{\overline{\bigcup\{A_n:n<\omega\}}}\in K(X).$$ This proves (ii).\end{proof}

\noindent Proposition 5 is also true with $(-\infty,\infty)$ in the place of $[0,\infty)$, but proving (i)$\Rightarrow $(ii) requires some care. Suppose (i), and assume $X$ is non-compact. Let $(A_n)\in [K(X)]^\omega$. If for every $n<\omega$, $(-\infty,\infty)\setminus f^{-1}[A_n]$ is unbounded in the positive and negative directions, then the pre-image of each arc is a closed and bounded interval (Proposition 4 arguments), and (ii) follows. If, on the other hand,  an initial or final segment of $(-\infty,\infty)$ maps into an arc $A_m$, then every other $A_n$ ($n\neq m$) has a pre-image $[a_n,b_n]$.  Applying (i) to these intervals will show (ii).%




\newpage
\section{Zero-dimensional collections of arcs}

\begin{wrapfigure}[14]{R}{0.45\textwidth}
\centering
 \includegraphics[scale=.38]{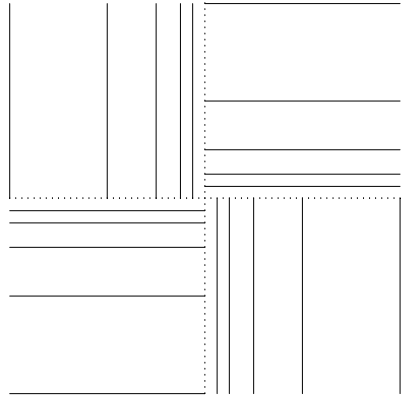} 
  \centering \caption{$\mathfrak C_\omega$}
  \label{fiffe}
\end{wrapfigure}

The result in this section is  based on `The Sierp\-i\'n\-ski Theorem' -- \cite{eng} 6.1.27.

\begin{uts}[Sierp\-i\'n\-ski]If the continuum $X$ has a  countable cover $\{X_i:i<\omega\}$ by pairwise disjoint closed subsets, then at most one of the sets $X_i$ is non-empty.\end{uts}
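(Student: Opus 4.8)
The plan is to argue by contradiction. Suppose at least two of the $X_i$ are non-empty. If only finitely many were non-empty, say $X_{i_1},\dots,X_{i_k}$ with $k\geq 2$, then $X_{i_1}$ and $X_{i_2}\cup\cdots\cup X_{i_k}$ would be two disjoint non-empty closed sets covering $X$, contradicting connectedness. Hence I may discard the empty sets, reindex, and assume $X_i\neq\varnothing$ for every $i<\omega$. The goal is then to exhibit a point of $X$ lying in none of the $X_i$, which is absurd since they cover $X$.

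The tool I would use is the classical \emph{boundary bumping} property of continua: if $C$ is a continuum and $A$ is a non-empty proper closed subset of $C$, then every component of $A$ meets the boundary $\partial A$ of $A$ in $C$. Using it, I construct by induction a decreasing sequence of subcontinua $X=C_0\supseteq C_1\supseteq C_2\supseteq\cdots$, each meeting at least two of the $X_i$, such that $C_n\cap X_{n-1}=\varnothing$ for every $n\geq 1$. At stage $n\geq 1$ I eliminate $X_{n-1}$: if $C_{n-1}$ already misses $X_{n-1}$ I set $C_n=C_{n-1}$; otherwise $F:=C_{n-1}\cap X_{n-1}$ is a non-empty proper closed subset of $C_{n-1}$ (proper because $C_{n-1}$ meets some $X_j$ with $j\neq n-1$). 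Fixing a point $q\in C_{n-1}\cap X_j$, I use normality to choose an open $U\supseteq F$ in $C_{n-1}$ whose closure $\overline U$ misses $C_{n-1}\cap X_j$, and I let $C_n$ be the component of $q$ in the closed set $A:=\overline{C_{n-1}\setminus\overline U}$. Since $F\subseteq U$, no point of $F$ is a limit of points outside $\overline U$, so $A\cap X_{n-1}=\varnothing$ and thus $C_n\cap X_{n-1}=\varnothing$. Boundary bumping, applied to $A$ inside $C_{n-1}$, forces $C_n$ to meet a boundary point $b$; every such $b$ lies in $\overline U$, hence $b\neq q$, $b\notin X_j$, and $b\notin X_{n-1}$. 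Therefore $C_n$ is non-degenerate and meets both $X_j$ (via $q$) and whichever $X_k$ contains $b$, and the induction continues.

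With this chain in hand, $\bigcap_{n<\omega}C_n$ is a nested intersection of non-empty continua, hence a non-empty continuum; and since $X_i$ is avoided from stage $i+1$ onward and the $C_n$ are nested, this intersection is disjoint from every $X_i$. Picking any $x$ in it, $x$ belongs to some $X_m$ yet lies in $C_{m+1}$ with $C_{m+1}\cap X_m=\varnothing$ — the desired contradiction. I expect the main obstacle to be the inductive step, specifically guaranteeing that $C_n$ still meets two distinct sets after a neighborhood of $X_{n-1}$ has been deleted. This non-degeneracy is exactly what boundary bumping supplies, and the delicate part is arranging $U$ so that the selected component genuinely reaches a frontier point lying in a third set, rather than collapsing inside $X_j$ or shrinking to the single point $q$.
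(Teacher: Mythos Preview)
The paper does not prove Sierpi\'nski's Theorem at all; it merely quotes the statement and cites Engelking \cite{eng}~6.1.27 as a reference, then uses it as a black box in the proof of Lemma~1. So there is no ``paper's own proof'' to compare against.

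Your argument is correct and is essentially the classical proof. The decreasing chain of subcontinua, each one shedding a further $X_i$ while remaining non-degenerate by boundary bumping, is exactly the standard construction; your handling of the inductive step is sound (in particular, $A=\overline{C_{n-1}\setminus\overline U}$ is closed and proper, misses $X_{n-1}$, contains $q$, and any frontier point $b$ of $A$ lies in $\overline U\setminus U$, hence outside both $X_j$ and $X_{n-1}$, giving the required second index). The one cosmetic point worth tightening is to state explicitly that $C_n$, being a component of the compact set $A$, is itself a continuum, so the nested-intersection step at the end is justified by compactness.
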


That statement  is false with `connected space' in the place of `continuum'; Figure 1 shows a connected space $\mathfrak C_\omega$ which is the union of $\omega$-many disjoint arcs;  $\mathfrak C_\omega=\bigcup \{C_n:n<\omega\}$. In $\mathfrak C_\omega$, observe that for each $m<\omega$ there is a subsequence $(C_{n_k})$  and a sequence of points $(x_k)\in \prod \{C_{n_k}:k<\omega\}$ such that $(x_k)$ converges to a point of $C_m$ and  $\overline{\bigcup \{C_{n_k}:k<\omega\}}$ is non-compact. 


\begin{ul}Let $X=\bigcup\{A_n:n<\omega\}$ be the union of a countable sequence of disjoint continua.  If  the closure $\overline{\bigcup \{A_{n_k}:k<\omega\}}$ is compact for every subsequence $(A_{n_k})$ such that $c(X)\cap \prod\{A_{n_k}:k<\omega\}\neq\varnothing$, 
 then the decomposition  $\widetilde{X}:=\{A_n:n<\omega\}$ is zero-dimensional. \end{ul}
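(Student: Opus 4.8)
\emph{Plan.} The plan is to regard $\widetilde{X}=\{A_n:n<\omega\}$ as the quotient space $X/\widetilde{X}$, with quotient map $q:X\to\widetilde{X}$ collapsing each continuum $A_n$ to a point; every fibre $A_n$ is compact and $\widetilde{X}$ is countable. The strategy is to show that the decomposition is \emph{upper semi-continuous}, i.e. that $U^{\#}:=\{x\in X:A_{n(x)}\subseteq U\}$ is open in $X$ for every open $U\subseteq X$ (here $A_{n(x)}$ denotes the fibre containing $x$). Granting this, $q$ is a closed map with compact fibres, hence perfect; since $X$ is metrizable, $\widetilde{X}$ is then a perfect image of a metric space and is therefore metrizable by a standard theorem on perfect mappings (\cite{eng}). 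A countable metrizable space has arbitrarily small clopen balls and so is zero-dimensional, which is the desired conclusion. Thus everything reduces to upper semi-continuity, and this is exactly where the hypothesis must be invoked.

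\emph{Reducing to a compactum.} To prove that $U^{\#}$ is open, I would fix $x_0\in U^{\#}$ and set $A_n:=A_{n(x_0)}\subseteq U$. If $U^{\#}$ failed to contain a neighbourhood of $x_0$, there would be points $p_k\to p$ with $p\in A_n$, $p_k\in A_{m_k}$, and $A_{m_k}\not\subseteq U$. A single fibre cannot have points converging to a point of a different fibre, so after passing to a subsequence the indices $m_k$ are distinct. Then $(p_k)$ is a convergent sequence with $p_k\in A_{m_k}$, whence $c(X)\cap\prod\{A_{m_k}:k<\omega\}\neq\varnothing$, and the hypothesis yields that $C:=\overline{\bigcup\{A_{m_k}:k<\omega\}}$ is a \emph{compact} subset of $X$. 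The purpose of this step is precisely to trade the (possibly non-locally-compact) space $X$ for the compactum $C$, on which the Sierpi\'nski Theorem becomes available.

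\emph{Separating inside the compactum.} Let $Q$ be the component of $p$ in $C$. Then $Q$ is a continuum covered by the countably many pairwise disjoint compacta $\{A_i\cap Q\}$, so by the Sierpi\'nski Theorem exactly one of them is non-empty; since $p\in A_n$, this forces $Q\subseteq A_n\subseteq U$. The set $C\setminus U$ is a compactum disjoint from the component $Q$, so (components coincide with quasi-components in a compactum) there is a set $D$ that is clopen in $C$ with $p\in D\subseteq U$. Finally, $D$ is open in $C$ and $p_k\to p$, so $p_k\in D$ for large $k$; as $A_{m_k}\subseteq C$ is connected and meets the clopen set $D$, we obtain $A_{m_k}\subseteq D\subseteq U$, contradicting $A_{m_k}\not\subseteq U$. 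Hence $U^{\#}$ is open, the decomposition is upper semi-continuous, and the proof finishes as in the first paragraph.

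\emph{Main obstacle.} The heart of the matter is upper semi-continuity: because $X$ need not be locally compact, arcs could \emph{a priori} accumulate onto $A_n$ while running arbitrarily far away (exactly the behaviour exhibited by the arcs of $\mathfrak{C}_\omega$), and no purely local argument rules this out. The hypothesis is the tool that forbids it: any such accumulation supplies a convergent point-selection, hence a compact closure $C$, and the Sierpi\'nski Theorem then confines the limiting behaviour to the single fibre $A_n$. I expect the delicate points in writing this up to be the extraction of the distinct indices $m_k$ and the passage from the in-$C$ separation back to the openness of $U^{\#}$.
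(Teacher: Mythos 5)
Your proof is correct, but it reaches the conclusion by a genuinely different route than the paper. The paper works directly with the countable decomposition space: given a fibre $A_0$ and a disjoint saturated closed set $C$, it recursively builds disjoint saturated open $\epsilon$-thickenings around each, using the hypothesis plus Sierpi\'nski's Theorem at each stage to guarantee that no constituent is forced into both thickenings; this establishes regularity, and ``countable regular $\Rightarrow$ zero-dimensional'' finishes. You instead prove the stronger statement that the decomposition is upper semi-continuous (equivalently, that the quotient map $q$ is closed, hence perfect since the fibres are compact), and your verification of this is sound: the distinct-index extraction works because a compact fibre is closed and so cannot contain a sequence converging into a different fibre, the hypothesis then yields the compactum $C$, Sierpi\'nski confines the component $Q$ of $p$ to $A_n$, and the quasi-component argument in the compactum $C$ produces the clopen set $D$ that absorbs the tails of the $A_{m_k}$. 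What your approach buys is more than the lemma asserts: metrizability of $\widetilde{X}$ comes for free from the perfect-map theorem, whereas the paper has to assert separability and metrizability of $\widetilde{X_i}$ separately (with a citation to Kuratowski) before extracting its clopen basis in Section 7; the cost is the appeal to Morita--Hanai--Stone rather than the elementary ``countable regular Lindel\"of'' argument. Both proofs ultimately rest on the same mechanism --- a convergent point-selection across infinitely many fibres forces a compact closure, and Sierpi\'nski's Theorem then traps any limit continuum inside a single fibre --- so the difference is in the packaging, not the engine. One cosmetic remark: in your second paragraph the limit point $p$ is just $x_0$ itself; stating this explicitly would tighten the write-up.
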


\

\vspace{-7mm}

\begin{proof}It suffices to show $\widetilde{X}$ is regular because every countable regular space has dimension zero. (\textit{Hint:} Every  regular Lindel\"{o}f space is normal so Urysohn's Lemma can be applied.)

To that end, let $m<\omega$ and let $C$ be a closed subset of $X$ that misses $A_m$ and is a union of constituents; $C=\bigcup \{A_n:n\in I\}$ for some non-empty $I\subseteq \omega\setminus \{m\}$.  Assume that $m=0$. We find disjoint $X$-open sets $U_0$ and $U_1$, each of which is a union of continua from $\{A_n:n<\omega\}$ and such that $A_0\subseteq U_0$ and $C\subseteq U_1$.  

Recursively define two sequences of open sets $(U^n_0)$ and $(U^n_1)$ as follows.

\underline{Step $0$}: There exists $\epsilon_0>0$ such that  $$d(A_n,A_0)+d(A_n,C)\geq 4\epsilon_0$$  for all  $n<\omega$.\footnote{Here $d(A,B)=\inf \{d(x,y):x\in A\text{ and }y\in B\}$ for $A,B\subseteq X$.}  Otherwise, there is a sequence of arcs $(A_{n_k})$  such that $d(A_{n_k},A_0)+d(A_{n_k},C)\to 0$ as $k\to\infty$. There exists $x\in A_0$ and a sequence of points $x_j\in \bigcup \{A_{n_k}:k<\omega\}$ such that $x_j\to x$ as $j\to\infty$. Eventually $n_k\neq 0$ because $d(A_0,C)>0$. Thus $d(A_{n_k},A_0)>0$ for sufficiently large $k$, so that $$\{k<\omega:(\exists j<\omega)(x_j\in A_{n_k})\}$$ is infinite.  By hypothesis, a subsequence of $(A_{n_k})$ converges to a point $K\in K(X)$ (consult Proposition 5),  which is necessarily a continuum.  Then $x\in K$ and $d(K,C)=0$. As $d(A_0,C)>0$, this means $K\cap A_l\neq\varnothing$ for some $l\neq 0$, contradicting  Sierp\-i\'n\-ski's Theorem. 

 Put $A_{-1}=C$, $\epsilon_{-1}=\epsilon_0$, $N^0_0=\{0\}$, $N^0 _1=\{-1\}$, $$U^0_0=B(A_0,2\epsilon_0)\text{,\; and\; } U^0_1=B(A_{-1},2\epsilon_{-1}).$$This completes the base step.

\underline{Step $n$}: Suppose  $N^{n-1}_i\subseteq \omega\cup \{-1\}$ and open sets $U^{n-1}_i\subseteq X$ have been defined for each $i<2$, and that numbers $\epsilon_k>0$ ($k\in N^{n-1}_0\cup N^{n-1}_1$) have been chosen, so that:
\begin{equation}
\begin{aligned}
  &\textstyle{B\big(A_k,\epsilon_k[3-\sum_{j=0}^{n-1}2^{-j}]\big)\subseteq U^{n-1} _i\text{ for each }k\in N^{n-1}_i\text{; and}}\\[.5em]
  &A_k\cap U^{n-1}_i= \varnothing\text{ or }A_k\cap U^{n-1}_{1-i}= \varnothing\text{ for each }k<\omega.
\end{aligned}
\end{equation}

Let $k^*<\omega\setminus (N^{n-1}_0\cup N^{n-1}_1)$ be least such that $A_{k^*}\cap U^{n-1}_i\neq\varnothing$ for some (unique) $i<2$ (if there is no such $k^*$, then end the recursion and put $U^l_i=U^{n-1}_i$ and $N^l_i=N^{n-1}_i$ for each $l\geq n$ and $i<2$).
 
\begin{figure}[h]
  \centering
  \includegraphics[scale=.33]{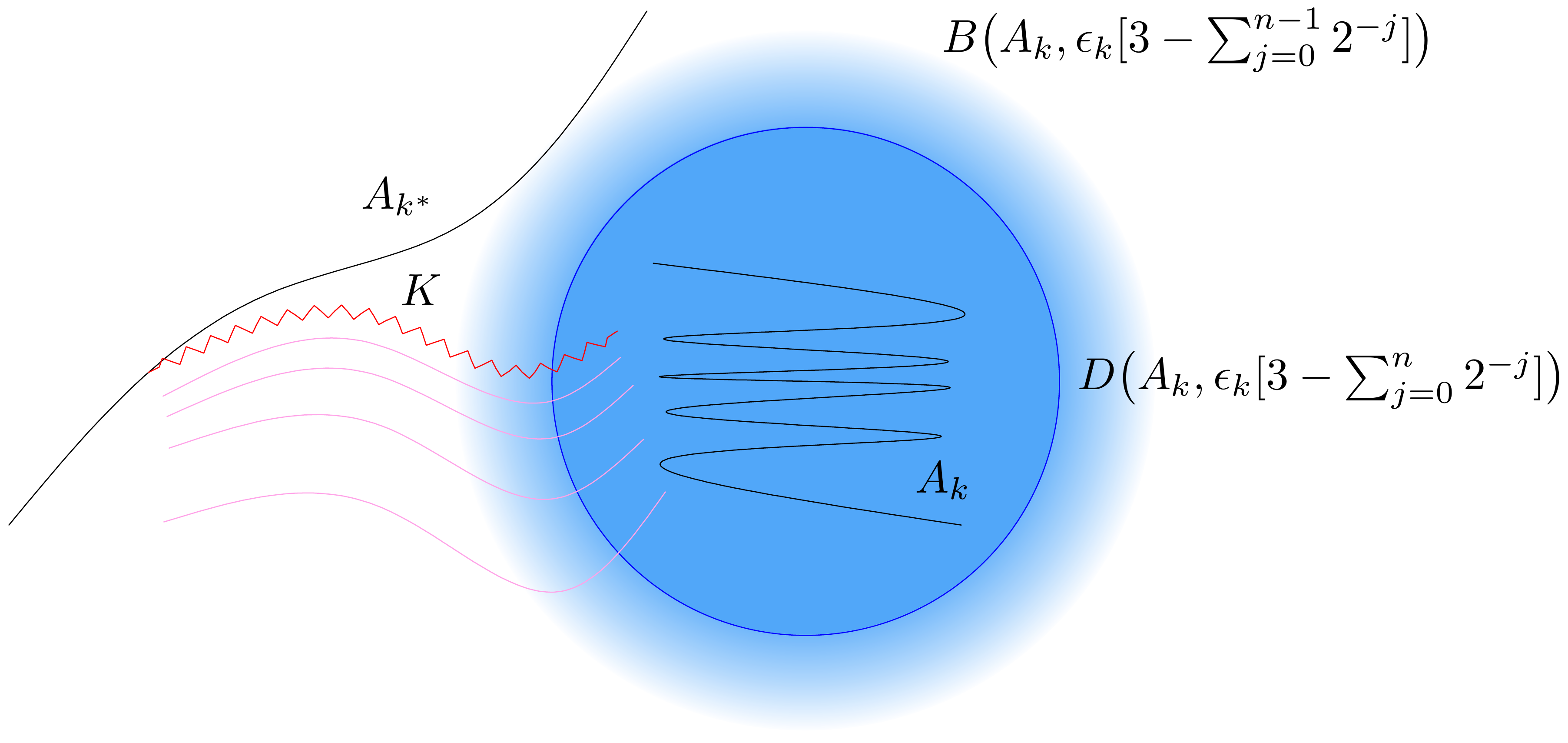} 
   \caption{Step $n$}
\end{figure}

There exists $\epsilon_{k^*}$ such that no constituent within $2\epsilon_{k^*}$ of $A_{k^*}$ also meets  
$$U^n_{1-i}:=\bigcup \big\{B\big(A_k,\epsilon_k\textstyle{[3-\sum_{j=0}^{n}2^{-j}]}\big):k\in N^{n-1}_{1-i}\big\}.$$ 
Otherwise, there exists $k\in N^{n-1}_{1-i}$ such that continua meeting $B\big(A_k,\epsilon_k[3-\sum_{j=0}^{n}2^{-j}]\big)$ get arbitrarily close to $A_{k^*}$. As in the base step,  a subsequence of continua  converges to a continuum $K\in K(X)$ with $$K\cap A_{k^*}\neq\varnothing\text{\;\; and \;\;} K\cap D\big(A_k,\epsilon_k[3-\textstyle{\sum} _{j=0}^{n}2^{-j}]\big)\neq\varnothing.$$ By (6.1) we have $A_{k^*}\cap D\big(A_k,\epsilon_k\textstyle{[3-\sum_{j=0}^{n}2^{-j}]}\big)=\varnothing$. Thus $K\cap A_l\neq\varnothing$ for some $l\neq k^*$, contradicting Sierp\-i\'n\-ski's Theorem.

Put $U^{n}_i:=U^{n-1}_i\cup B(A_{k^*},2\epsilon_{k^*})$, $N^n_i:=N^{n-1}_i\cup \{k^*\}$, and $N^n_{1-i}:=N^{n-1}_{1-i}$. The conditions in (6.1) are now  satisfied with all instances of $n-1$ replaced by $n$. This completes the recursive step. Let
\begin{align*}N_0&=\bigcup_{n<\omega} N^n_0; \\
N_1&=\Big(I\cup \bigcup_{n<\omega} N^n_1\Big)\setminus \{-1\};\text{ and} \\
U_i&=\bigcup_{k\in N_i}B(A_k,\epsilon_k)\text{ for each $i<2$}.
\end{align*}
Obviously $U_0$ and $U_1$ are open sets, $A_0\subseteq U_0$, and $C\subseteq U_1$. By construction we also have $U_0\cap U_1=\varnothing$, and $U_i$ is the union of  $\{A_k:k\in N_i\}$ for  $i<2$. \end{proof}



\section{Proof of Theorem II (Sufficiency)}
\noindent Suppose $f$ is  recurrent \& arc-complete. 

Let $\{d_i:i<\omega\}$ be a dense subset of $X$, and assume $\diam(X)>2$.

\begin{ucl} For each $i\in \{1,2,3,...\}$ there is a sequence $(A_n)$ of disjoint continua in $X\setminus B(d_i,\frac{1}{i+1})$ such that  
\begin{enumerate}[label=\textnormal{(\roman*)}]

\item $X\setminus B(d_i,\frac{1}{i})\subseteq X_i:=\bigcup \{A_n:n<\omega\};$ 

\item if $n\neq 0$ then $A_n$ is an arc; and

\item $\overline{X_i}=X_i$.\end{enumerate}\end{ucl}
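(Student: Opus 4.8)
The plan is to build, for each fixed $i$, a decomposition of a closed set containing $X\setminus B(d_i,\tfrac1i)$ into the continuum $A_0$ (which may be large and non-arc-like) together with a countable family of arcs, and then invoke Lemma~1 to guarantee the decomposition has the right topological structure. First I would single out a ``core'' piece: since $f$ is recurrent, the set $X\setminus B(d_i,\tfrac1{i+1})$ is closed and avoids a neighborhood of $d_i$, so I would take its preimage under $f$, which is a closed subset of $\ell$, and examine its structure. The idea is that $f^{-1}[X\setminus B(d_i,\tfrac1{i+1})]$ is a closed subset of the (half-)line, hence a countable union of disjoint closed intervals (some possibly unbounded, some degenerate points). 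I would let the (at most one) unbounded tail-interval, if present, generate $A_0$ by taking $A_0=\overline{f[\text{tail}]}$; by arc-completeness and recurrence this closure is compact or all of $X$, and in the relevant case it will be a proper closed connected set, hence a continuum. Every other preimage-interval $[a_n,b_n]$ is compact, so $f[a_n,b_n]$ is an arc (or point) by Proposition~4, giving the arcs $A_n$ for $n\neq 0$.

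The key steps, in order, are as follows. First, verify that $f^{-1}[X\setminus B(d_i,\tfrac1{i+1})]$ decomposes into countably many disjoint closed intervals and isolate the unbounded component(s); here recurrence ensures that at most the extreme ends are unbounded, and the construction should be arranged so $A_0$ absorbs any unbounded piece while all remaining pieces are bounded. Second, apply Proposition~4 to each bounded interval to conclude $f[a_n,b_n]$ is an arc or a point, establishing (ii) after discarding or merging degenerate pieces. Third, check (i): because $X\setminus B(d_i,\tfrac1i)$ lies inside the slightly larger set $X\setminus B(d_i,\tfrac1{i+1})$ whose preimage we decomposed, the union $X_i=\bigcup_n A_n$ contains $X\setminus B(d_i,\tfrac1i)$. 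Fourth, and most importantly, establish closedness (iii): I want $\overline{X_i}=X_i$, equivalently that no sequence of the $A_n$ accumulates to a point outside $X_i$. This is exactly where Lemma~1 and arc-completeness enter: any convergent sequence of points drawn from the $A_n$ gives an element of $c(X)\cap\prod A_{n_k}$, so arc-completeness forces $\overline{\bigcup A_{n_k}}$ to be compact (it cannot be all of $X$ since we stay outside $B(d_i,\tfrac1{i+1})$), and then Lemma~1 guarantees the decomposition $\widetilde{X_i}$ is zero-dimensional, pinning the limit inside some single $A_n$ and hence inside $X_i$.

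The main obstacle I anticipate is the bookkeeping around the unbounded end(s) and the role of $A_0$. Arc-completeness and Proposition~4 handle compact preimages cleanly, but if both ends of $\ell=(-\infty,\infty)$ contribute unbounded tails, or if the closed preimage set has an unbounded component whose image is not yet known to be a proper subcontinuum, one must argue carefully that $A_0$ is a genuine continuum contained in $X$ and that it does not secretly equal $X$ (which would contradict staying outside the ball $B(d_i,\tfrac1{i+1})$). I would resolve this by using recurrence to show the relevant tail-closure is a proper closed connected subset, hence compact by the composant property recorded in the Remark after Proposition~4, and therefore a legitimate continuum $A_0$. A secondary technical point is ensuring the family is genuinely disjoint and that the ``leftover'' degenerate intervals (single points) are absorbed or enumerated so that condition (ii) holds for all $n\neq 0$; this is routine once the interval decomposition is fixed, but it requires care in the enumeration. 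Once disjointness, the arc property, the containment (i), and the closedness via Lemma~1 are in place, the claim follows.
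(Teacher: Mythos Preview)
Your overall strategy---decompose $f^{-1}[X\setminus B(d_i,\tfrac1{i+1})]$ into closed intervals and take their $f$-images---matches the paper's, but the argument for (iii) has a genuine gap, and Lemma~1 is invoked in the wrong place.

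For (iii) you say arc-completeness gives compactness of $\overline{\bigcup A_{n_k}}$, and then Lemma~1 ``pins the limit inside some single $A_n$.'' But Lemma~1 only outputs zero-dimensionality of the quotient $\widetilde{X_i}$; it says nothing about where limit points of $X_i$ land inside $X$. What is actually needed is: from the Vietoris-convergent subsequence $A_{n_k}\to K\in K(X)$ (Proposition~5), observe that $K$ is a subcontinuum of the closed set $X\setminus B(d_i,\tfrac1{i+1})$, apply Proposition~4 to write $K=f[a,b]$, and then use \emph{maximality} of the chosen intervals to conclude $[a,b]$ lies inside one of them, whence $y\in K\subseteq A_n$ for some $n$. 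For this last step to work one must know $K$ meets $X\setminus B(d_i,\tfrac1i)$, so that the component containing $[a,b]$ is non-degenerate. The paper arranges this by enumerating \emph{only} those components that already contain a preimage of a point of $X\setminus B(d_i,\tfrac1i)$; your decomposition takes all components, so a limit $K$ could be a degenerate point in the buffer annulus $\tfrac1{i+1}\le d(\cdot,d_i)<\tfrac1i$ sitting in a degenerate component you never listed. (This selection also resolves your countability worry: components containing a point at distance $\ge\tfrac1i$ from $d_i$ are automatically non-degenerate by continuity of $f$, hence there are only countably many.)

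Lemma~1 \emph{is} used in the paper's proof, but for a purpose you dismissed as ``bookkeeping around the unbounded ends.'' In the case $\ell=(-\infty,\infty)$, after setting $R=f(-\infty,s]$, one must show $\overline{R}$ meets at most one of the arcs $B_m$ coming from the positive ray, so that $A_0:=\overline{R}\cup B_{m^*}$ is a single continuum and the family stays disjoint. This is not obvious, and it is precisely where the zero-dimensional separation furnished by Lemma~1 is applied. Finally, your appeal to ``the composant property recorded in the Remark after Proposition~4'' to obtain compactness of $\overline{R}$ is circular---we do not yet know $X$ is a composant. Write instead $R=\bigcup_n f[s-n,s]$ and use arc-completeness directly: the closure is compact or equal to $X$, and the latter is impossible since $\overline{R}\subseteq X\setminus B(d_i,\tfrac1{i+1})$.
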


\begin{proof}[Proof of Claim 7.1]

\
\smallskip

 \textit{Case 1:} $\ell=[0,\infty)$.
 \smallskip

By recurrence of $f$ there is a sequence increasing sequence $(r_k)\in [0,\infty)^\omega$ such that $f(r_k)\in B(d_i,\frac{1}{i+1})$ and $r_k\to \infty$ as $k\to\infty$. For each $z\in X\setminus B(d_i,\frac{1}{i})$ there is a unique $k<\omega$ such that $f^{-1}(z)\in (r_k,r_{k+1})$; let $M_z$ be the component of $f^{-1}(z)$ in $(r_k,r_{k+1})\setminus f^{-1}[B(d_i,\frac{1}{i+1})]$.   Each $M_z$ is a non-degenerate closed and bounded interval, and $\{M_z:z\in X\setminus B(d_i,\frac{1}{i})\}$ is countable.  Let $\{A_n:n<\omega\}$ be an enumeration of $\{f[M_z]:z\in X\setminus B(d_i,\frac{1}{i})\}$.

Properties (i) and (ii) are clear.  We  need to prove $\overline{X_i}\subseteq X_i$ for (iii). Well, let $y\in \overline{X_i}$. There is a sub-sequence of arcs $(A_{n_k})$ such that $y\in \overline{\bigcup \{A_{n_k}:k<\omega\}}$.  By the arc-complete  property,  a subsequence of $(A_{n_k})$ converges to a continuum $K\in K(X\setminus B(d_i,\frac{1}{i+1}))$.  Then $y\in K$ and $K\cap X\setminus B(d_i,\frac{1}{i})\neq\varnothing$.   Let $z\in K\cap X\setminus B(d_i,\frac{1}{i})$. By Proposition 4 and maximality of $M_z$ we have $y\in K\subseteq f[M_z]\subseteq X_i$. 

 \smallskip
\textit{Case 2:}  $\ell=(-\infty,\infty)$. 
 \smallskip
 
Assume $f$ is positively recurrent.  By the proof of Case 1 we may further assume that $f(-\infty,r]\cap B(d_i,\frac{1}{i+1})=\varnothing$ for some $r\in (-\infty,\infty)$. Let $$s=\sup\big\{r\in (-\infty,\infty):f(-\infty,r]\subseteq X\setminus B(d_i,\textstyle{\frac{1}{i+1}})\big\},$$ and put $R=f(-\infty,s]$.  As in Case 1 there is a sequence $(B_m)$ of disjoint arcs in $X\setminus B(d_i,\frac{1}{i+1})$ such that $$f [s,\infty)\setminus B(d_i,\textstyle{\frac{1}{i}})\subseteq \bigcup\{B_m:m<\omega\}$$ and $B_m\setminus B(d_i,\frac{1}{i})\neq\varnothing$ for each $m<\omega$.

There is at most one $m<\omega$ such that $\overline R \cap B_m\neq\varnothing$. For suppose otherwise that $\overline R$ intersects more than one of these arcs; without loss of generality $\overline R \cap B_0\neq\varnothing$ and $\overline R \cap B_1\neq\varnothing$.  Then there are  sequences $(r_n),(s_n)\in (-\infty,s]^\omega$ such that $s_{n+1}<r_{n}<s_n$ for every $n<\omega$, $r_n\to -\infty$ as $n\to\infty$, $(f(r_n))$ converges to a point in $B_{0}$, and $(f(s_n))$ converges to a point in $B_{1}$. Let $$X':=B_0\cup B_1\cup\bigcup \{f[r_n,s_n]:n<\omega\}.$$  By Lemma 1 there is an $X'$-clopen set $C$ such that $B_0\subseteq C$ and $C\cap B_1=\varnothing$. There exists $n<\omega$ such that $f(r_n)\in C$ and $f(s_n)\in X'\setminus C$, contradicting the fact that $f[r_n,s_n]$ is connected. 

If there exists such an $m$, call it $m^*$ and let $A_0=\overline R \cup B_{m^*}$, $A_n=B_{n-1}$ for $1\leq n \leq m^*$, and $A_n=B_{n}$ for $n>m^*$. Otherwise set $A_0=\overline R$ and $A_n=B_{n-1}$ for each $n\geq 1$. The sequence $(A_n)$ is as desired.\end{proof}


Each $\widetilde{X_i}$ (the set of continua components of $X_i$)   is zero-dimensional by Lemma 1.  $\widetilde{X_i}$ is also separable and metrizable, and thus has a basis of clopen sets $\{C_{\langle i,j\rangle}:j<\omega\}$ (cf.  \S46 V Theorem 3 \cite{kur}). For each $i<\omega$ let $\varphi_i:X_i\to \widetilde{X_i}$ be the canonical epimorphism.  

Since $X_i$ is closed in $X$, Urysohn's Lemma provides for each $\langle i,j\rangle\in \omega^2$  a mapping $f_{\langle i,j\rangle}:X\to [0,1]$ such that $$f_{\langle i,j\rangle}\big[\varphi_i ^{-1}[C_{\langle i,j\rangle}]\big]=0\text{\; and \;}f_{\langle i,j\rangle}\big[X_i\setminus \varphi_i ^{-1}[C_{\langle i,j\rangle}]\big]=1.$$
Let  $h:X\hookrightarrow [0,1]^\omega$ be a homeomorphic embedding of $X$ into the Hilbert cube such that for every $\langle i,j\rangle\in \omega^2$ there exists $n<\omega$ such that $\pi_n\circ h=f_{\langle i,j\rangle}$. 

Then $Y:=\overline{\overline{h[X]}}$ is a metrizable continuum in which $X$ is densely embedded, and 
\begin{equation}
\overline{\overline{\varphi_i ^{-1}[C_{\langle i,j\rangle}]}}\cap \overline{\overline{X_i\setminus \varphi_i ^{-1}[C_{\langle i,j\rangle}]}}=\varnothing\text{ for each }\langle i,j\rangle\in \omega^2.\end{equation}

\begin{ucl}$X$ is a composant of $Y$.\end{ucl}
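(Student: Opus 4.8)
The plan is to show that $X$ is precisely the composant of some point of $Y$, where $Y=\overline{\overline{h[X]}}$ is the compactification built above. The natural candidate is the composant of $h(f(0))$ (or, in the two-sided case, the composant of any point $h(f(r))$). First I would verify the easy inclusion: every point of $X$ lies in a proper subcontinuum of $Y$ through the basepoint. Indeed, for any $x\in X$ the arc $h\circ f[0,\,f^{-1}(x)]$ is a proper subcontinuum of $Y$ containing $h(f(0))$ and $x$ — proper because $X$ is noncompact (by recurrence and Proposition~1, $X$ is first category, hence not all of $Y$) so no finite arc can exhaust $Y$. This gives $X\subseteq P$, where $P$ denotes the composant of $h(f(0))$ in $Y$.

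The substantive direction is $P\subseteq X$, i.e. that no point of $Y\setminus X=\overline{\overline{h[X]}}\setminus h[X]$ lies in a proper subcontinuum through the basepoint; equivalently, any subcontinuum of $Y$ meeting $Y\setminus X$ must equal all of $Y$. Here is where I would exploit the separation property~(7.1) that the embedding $h$ was engineered to have. Suppose $K\subsetneq Y$ is a subcontinuum with $K\cap(Y\setminus X)\neq\varnothing$; pick $y\in K\setminus X$. The idea is to examine how $K$ interacts with the zero-dimensional decompositions $\widetilde{X_i}$. For each large $i$, the point $y$ lies outside $h[X]$, and I would argue that $y$ is a limit of points of $X_i$; the clopen basis $\{C_{\langle i,j\rangle}\}$ together with~(7.1) partitions a neighborhood of the relevant part of $\overline{\overline{X_i}}$ into $Y$-separated pieces. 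Since $K$ is a continuum, Sierpiński's Theorem forbids $K$ from being split nontrivially by a countable partition into disjoint relatively closed pieces, which forces $K$ to ``fill'' an entire constituent direction and ultimately to contain a dense subset of $X$; taking closures, $K\supseteq\overline{\overline{X}}=Y$, whence $K=Y$. This should show every proper subcontinuum of $Y$ through the basepoint stays inside $X$, giving $P\subseteq X$.

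I would organize the $P\subseteq X$ argument around the following claim: if a subcontinuum $K$ of $Y$ meets $Y\setminus X$, then $\overline{X_i}\subseteq K$ for every sufficiently large $i$. Granting this claim, letting $i\to\infty$ and using $\overline{\bigcup_i X_i}=X$ (since $\bigcup_i\bigl(X\setminus B(d_i,\frac1i)\bigr)=X$) yields $X\subseteq K$, hence $Y=\overline{\overline{X}}\subseteq K$, so $K=Y$. To prove the claim I would suppose $K\cap(Y\setminus X)\neq\varnothing$ but $\overline{X_i}\not\subseteq K$ for some $i$; then $K$ separates at least one constituent $A_n$ of $X_i$ from another across a clopen set of $\widetilde{X_i}$, and lifting that clopen separation through $\varphi_i$ and applying~(7.1) produces a partition of $K$ into two disjoint nonempty relatively closed pieces, contradicting Sierpiński.

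The main obstacle I anticipate is the bookkeeping in the claim: carefully relating a limit point $y\in Y\setminus X$ to the constituents $A_n$ of a \emph{fixed} level $X_i$, and checking that the clopen sets $C_{\langle i,j\rangle}$ (whose closures in $Y$ are separated by~(7.1)) genuinely disconnect $K$ rather than merely disconnecting $X_i$. The delicate point is that~(7.1) separates $\overline{\overline{\varphi_i^{-1}[C_{\langle i,j\rangle}]}}$ from $\overline{\overline{X_i\setminus\varphi_i^{-1}[C_{\langle i,j\rangle}]}}$ in $Y$, but $y$ and other limit points may sit on the boundary between these, so I would need to choose the index $j$ (and possibly pass to a cofinal family of clopen sets shrinking around the constituent containing a preimage sequence converging toward $y$) so that $y$ falls cleanly into one closed piece while some other point of $K\cap\overline{X_i}$ falls into the complementary piece. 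Once that clean separation is secured, Sierpiński's Theorem closes the argument immediately.
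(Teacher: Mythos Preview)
Your outline for the easy inclusion $X\subseteq P$ is fine and matches the paper. The gap is in the other direction, and it is a real one.

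You propose to show that any subcontinuum $K$ of $Y$ meeting $Y\setminus X$ (and $X$) must contain $\overline{X_i}$ for all large $i$, by arguing that otherwise a clopen set of $\widetilde{X_i}$ lifts via (7.1) to a separation of $K$. But (7.1) only guarantees that the two closed sets
\[
\overline{\overline{\varphi_i^{-1}[C_{\langle i,j\rangle}]}}\quad\text{and}\quad\overline{\overline{X_i\setminus\varphi_i^{-1}[C_{\langle i,j\rangle}]}}
\]
are disjoint; together they cover $\overline{\overline{X_i}}$, not $Y$. So unless you already know $K\subseteq\overline{\overline{X_i}}$, these two pieces do not cover $K$: points of $K$ lying in the open ball $\overline{\overline{B(d_i,\tfrac{1}{i+1})}}$ can bridge the two sides, and your ``partition of $K$'' is not a partition at all. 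Nothing in your hypothesis $\overline{X_i}\not\subseteq K$ prevents this. (Also, a two-piece separation contradicts connectedness directly; Sierpi\'nski is not the relevant tool here.)

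The paper runs the argument in the opposite direction. Starting from a \emph{proper} subcontinuum $K$ containing $x\in X$ and $y\in Y\setminus X$, properness gives an $i$ with $\overline{\overline{B(d_i,\tfrac{1}{i})}}\cap K=\varnothing$, so that $K\subseteq\overline{\overline{X_i}}$. Now the two closures in (7.1) genuinely cover $K$. One then needs a clopen $C_{\langle i,j\rangle}$ that isolates the constituent $A_m\ni x$ from a sequence of constituents $A_{n_k}$ accumulating on $y$; this is where arc-completeness enters, to certify that $\bigcup_k A_{n_k}$ is closed in $X$ (hence misses $A_m$), so that such a $j$ exists in the clopen basis. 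With that $j$ in hand, $x$ lies in one closure, $y$ in the other, and $K$ is disconnected---the desired contradiction. Your outline never invokes properness of $K$ to confine $K$ inside a single level $\overline{\overline{X_i}}$, and never invokes arc-completeness to select the clopen set; both are essential, and without them the separation argument does not go through.
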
 
\begin{proof}[Proof of Claim 7.2]
It suffices to show $X$ contains every proper subcontinuum of $Y$ that meets   $X$. Well, suppose $K$ is a compact proper subset of $Y$ that contains points $x\in X$ and $y\in Y\setminus X$.  There exists $i<\omega$ such that $\overline{\overline{B(d_i,\frac{1}{i})}}\cap K=\varnothing$, so that $K\subseteq \overline{\overline {X_i}}$. Denote by $\{A_n:n<\omega\}$ the set of constituents of $X_i$. 

Let $m<\omega$ be the unique integer such that $x\in A_m$.  There is a sequence $(x_k)\in (X_i\setminus A_m)^\omega$ such that $x_k\to y$ as $k\to\infty$.  Let $n_k<\omega$ such that $x_k\in A_{n_k}$.  By arc-completeness $\bigcup\{A_{n_k}:k<\omega\}$ is closed in $X$. So there exists $j<\omega$ such that $A_m\in C_{\langle i,j\rangle}$ and  $C_{\langle i,j\rangle}\cap \{A_{n_k}:k<\omega\}=\varnothing$.  In addition to (7.1) we have  
\begin{align*}&x\in \overline{\overline{\varphi_i ^{-1}[C_{\langle i,j\rangle}]}}; \\
&p\in \overline{\overline{X_i\setminus \varphi_i ^{-1}[C_{\langle i,j\rangle}]}};\text{ and} \\
&K\subseteq \overline{\overline{X_i}}=\overline{\overline{\varphi_i ^{-1}[C_{\langle i,j\rangle}]}}\cup \overline{\overline{X_i\setminus \varphi_i ^{-1}[C_{\langle i,j\rangle}]}}.\end{align*}
Therefore  $K$ is not connected. \end{proof}

This completes our proof of Theorem II.

\section{Dimension-preserving compactifications}



Throughout this section, assume $X$ is a connected separable metric space. 

By \textit{a compactification of $X$} we shall mean a compact metrizable space in which $X$ is densely embedded. If   $\xi X$  and $\gamma X$ are two compactifications of $X$, then write $\xi X\geq \gamma X$ if  there is a continuous surjection $\hat f:\xi X\to \gamma X$ such that $\hat f\restriction X$ is the inclusion $X\hookrightarrow \gamma X$. More precisely, $\hat f\restriction \xi[X]=\gamma\circ \xi^{-1}$, where $\xi:X\hookrightarrow \xi X$ and $\gamma:X\hookrightarrow \gamma X$ are dense homeomorphic embeddings.

\begin{ul} For every compactification $\gamma X$  there is a compactification $\xi X\geq \gamma X$ such that $\dim(\xi X)=\dim(X)$.\end{ul}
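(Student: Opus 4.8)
The plan is to dispose of the infinite-dimensional case at once, reduce the problem to producing \emph{some} dominating compactification of covering dimension at most $n:=\dim(X)$, and then realize that compactification inside the limit of an inverse sequence of finite polyhedra of dimension $\le n$. First I would note that covering dimension is monotone on subspaces of separable metric spaces, so $\dim(\xi X)\ge \dim(X)$ for \emph{every} compactification $\xi X\supseteq X$. If $\dim(X)=\infty$ this already forces $\dim(\gamma X)=\infty=\dim(X)$, and $\xi X:=\gamma X$ works; so I may assume $n:=\dim(X)<\infty$ and it remains only to build some $\xi X\ge \gamma X$ with $\dim(\xi X)\le n$.

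Next I would set up an inverse system refining the one that produces $\gamma X$. Since $\gamma X$ is a compact metric space containing $X$ densely, fix a sequence of finite open covers $\mathcal V_1,\mathcal V_2,\dots$ of $X$ whose nerves $M_k$, with the natural bonding maps, present $\gamma X$ as $\varprojlim M_k$. Using $\dim(X)\le n$, I would then recursively choose finite open covers $\mathcal U_1\succ\mathcal U_2\succ\cdots$ of $X$ with mesh tending to $0$, each of order $\le n+1$, each refining both $\mathcal U_{k-1}$ and $\mathcal V_k$, and the whole sequence separating points from closed sets. The order reduction is the dimension-theoretic input: in a space of covering dimension $\le n$, every finite open cover has a finite open refinement of order $\le n+1$. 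Letting $N_k$ denote the nerve of $\mathcal U_k$ (a finite complex with $\dim(N_k)\le n$), the refinements furnish simplicial bonding maps $N_{k+1}\to N_k$ and, via partitions of unity, maps $X\to N_k$.

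Finally I would pass to the limit. Because the $\mathcal U_k$ separate points from closed sets and have mesh tending to $0$, the induced map $X\to \varprojlim N_k$ is an embedding; let $\xi X$ be the closure of its image. As $\varprojlim N_k$ is an inverse limit of compact polyhedra of dimension $\le n$, the standard theorem that $\dim(\varprojlim N_k)\le \sup_k\dim(N_k)$ gives $\dim(\xi X)\le n$, and with the monotonicity bound from the first paragraph, $\dim(\xi X)=n=\dim(X)$. Since each $\mathcal U_k$ refines $\mathcal V_k$, the refinements induce bonding-compatible maps $N_k\to M_k$ and hence a limit map $\varprojlim N_k\to \varprojlim M_k=\gamma X$ that is the identity on $X$; its restriction to $\xi X$ has compact image containing $X$, hence equal to $\gamma X$, witnessing $\xi X\ge \gamma X$.

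The main obstacle is the \emph{simultaneous} control demanded of the covers $\mathcal U_k$: they must be fine and separating enough to embed $X$, must refine the $\mathcal V_k$ coming from $\gamma X$ so that the limit dominates $\gamma X$, and must still be of order $\le n+1$ so the nerves remain $n$-dimensional. The tempting shortcut of forming the supremum of $\gamma X$ with an \emph{independently} chosen $n$-dimensional compactification fails, because the supremum of two compactifications can have dimension as large as the sum of their dimensions. The crux is therefore that the order-reduction to $\le n+1$ must be carried out \emph{above} the given covers $\mathcal V_k$ — applying it to a common refinement of $\mathcal U_{k-1}$ and $\mathcal V_k$, which is legitimate precisely because $\dim(X)\le n$ reduces the order of any finite open cover — and this is the step that transports the dimension bound across to the remainder $\xi X\setminus X$.
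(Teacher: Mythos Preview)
Your strategy is sound and is essentially a from-scratch proof of the very theorem the paper invokes as a black box. The paper's argument is a two-line citation: embed $\gamma X$ in $[0,1]^\omega$, then appeal to Engelking's result (Dimension Theory, 1.7.C) that a separable metric $X$ with $\dim(X)=n$ admits a compactification $\xi X$ of the same dimension over which any prescribed countable family of maps $X\to[0,1]$ extends; applying this to the coordinate projections of $\gamma$ yields $\hat f:\xi X\to\gamma X$ witnessing $\xi X\ge\gamma X$. Your nerve/inverse-limit construction is exactly the kind of argument that underlies Engelking's theorem (and Freudenthal's expansion), so what you gain is self-containment, at the cost of reproving a standard fact.

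One technical point deserves more care. You produce barycentric maps $\phi_k:X\to N_k$ from partitions of unity and simplicial bonding maps $p_k:N_{k+1}\to N_k$ from the refinements, and then speak of ``the induced map $X\to\varprojlim N_k$''. But $p_k\circ\phi_{k+1}$ is generally only \emph{close} to $\phi_k$ (they agree up to contiguity, not on the nose), so the thread $(\phi_k(x))_k$ need not lie in the inverse limit, and the same issue recurs for the comparison maps $N_k\to M_k$ when you argue that the limit map restricts to the identity on $X$. This is the standard delicate step in the Freudenthal construction and it is fixable --- one either replaces the simplicial bonding maps by the compositions $p_k:=\phi_k'\circ\text{(refinement)}$ built from compatible partitions of unity, or one passes to a Mardesic-style resolution --- but as written it is a genuine gap rather than a routine omission. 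Once you patch the compatibility, the rest of your outline (order reduction to $\le n+1$, monotonicity giving the reverse inequality, and the domination of $\gamma X$) goes through.
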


\begin{proof}Assume $\gamma X\subseteq [0,1]^\omega$, and let $\pi_n:[0,1]^\omega\to [0,1]$ be the $n$-th coordinate projections. According to 1.7.C in  \cite{eng3} (also \cite{eng2}), there is a compactification $\xi X$   such that $\dim(\xi X)=\dim(X)$ and each mapping $f_n:=\pi_n\circ \gamma\circ \xi^{-1}:\xi[X]\to [0,1]$ continuously extends to $\xi X$. Let $\hat f_n:\xi X\to [0,1]$ be the extension of $f_n$, and define $\hat f:\xi X\to [0,1]^\omega$ by $\pi_n\circ \hat f=\hat f_n$.  Then $\hat f$ maps onto $\gamma X$ and witnesses $\xi X\geq \gamma X$.\end{proof}

\begin{ul}If $X$ is a composant of $\gamma X\leq \xi X$, then $X$ is a composant of $\xi X$.\end{ul}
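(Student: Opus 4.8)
The plan is to show that if $x_0\in X$ is a point for which $X$ is the composant of $x_0$ in $\gamma X$, then $X$ is also the composant of $x_0$ in $\xi X$. Identifying $X$ with its dense copies $\xi[X]\subseteq \xi X$ and $\gamma[X]\subseteq \gamma X$, the defining map $\hat f\colon \xi X\to\gamma X$ becomes a continuous surjection fixing every point of $X$. (If $X$ is compact then $\gamma X=\xi X=X$ and there is nothing to prove, so assume $X$ is non-compact, i.e. $X$ is proper and dense in both compactifications.) The single observation that will drive everything is that $\hat f$ carries the remainder of $\xi X$ into the remainder of $\gamma X$; equivalently $\hat f^{-1}[X]=X$. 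First I would verify this: if $\hat f(p)=x\in X$, choose points of $X$ converging to $p$ in $\xi X$; applying $\hat f$ and continuity, their images converge to $x$ in $\gamma X$, hence converge to $x$ already in the subspace $X$ and therefore in $\xi X$, so $p=x$ by uniqueness of limits. Together with $\hat f\restriction X=\mathrm{id}$, this gives $\hat f^{-1}(x)=\{x\}$ for each $x\in X$.

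The inclusion $X\subseteq(\text{composant of }x_0\text{ in }\xi X)$ is then straightforward. Given $x\in X$, use that $X$ is the composant of $x_0$ in $\gamma X$ to pick a proper subcontinuum $K\subsetneq\gamma X$ with $x_0,x\in K$. Since $K$ is a proper subcontinuum through $x_0$, it lies inside the composant, so $K\subseteq X$. Thus $K$ is one and the same compact connected subset of the abstract space $X$, and as a subset of $\xi X$ it retains compactness and connectedness; hence $K$ is a subcontinuum of $\xi X$ containing $x_0$ and $x$, and it is proper because $K\subseteq X\subsetneq\xi X$. Therefore $x$ belongs to the composant of $x_0$ in $\xi X$.

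The reverse inclusion is where the work lies, and it is the step I expect to be the main obstacle. Let $C\subsetneq\xi X$ be a proper subcontinuum with $x_0\in C$, and suppose toward a contradiction that $C\not\subseteq X$, say $p\in C\setminus X$. By the remainder observation $\hat f(p)\in\gamma X\setminus X$, so the subcontinuum $\hat f[C]$ of $\gamma X$ contains $x_0$ yet is not contained in $X$. Because $X$ is the composant of $x_0$ in $\gamma X$, no proper subcontinuum through $x_0$ escapes $X$; therefore $\hat f[C]$ cannot be proper, forcing $\hat f[C]=\gamma X$. Now $\hat f^{-1}(x)=\{x\}$ for each $x\in X$, so surjectivity of $\hat f\restriction C$ onto $\gamma X$ drags every $x\in X$ into $C$; thus $X\subseteq C$, and since $C$ is closed and $X$ is dense, $C=\xi X$, contradicting properness. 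Hence every proper subcontinuum through $x_0$ is contained in $X$, so the composant of $x_0$ in $\xi X$ is contained in $X$, and combining the two inclusions shows it equals $X$. I expect the only delicate points to be the remainder observation and the passage from $\hat f[C]=\gamma X$ to $X\subseteq C$ via the fibrewise injectivity of $\hat f$ over $X$.
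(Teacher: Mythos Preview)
Your proof is correct and follows essentially the same route as the paper: establish the remainder observation $\hat f^{-1}[X]=X$ (the paper's equation (9.1)), then show both inclusions between $X$ and the composant of $x_0$ in $\xi X$, using for the nontrivial direction that a proper subcontinuum $C\ni x_0$ escaping $X$ would force $\hat f[C]=\gamma X$ and hence $C=\xi X$. The paper is terser---it asserts (9.1) with a one-line justification and says the inclusion $X\subseteq P$ is ``apparent''---but the logical skeleton is identical to yours.
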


\begin{proof}Let $\hat f$ witness $\xi X\geq \gamma X$.  Since $\hat f\restriction \xi[X]=\gamma\circ \xi^{-1}$ is a homeomorphism onto $\gamma[X]$, and $\gamma [X]$ is dense in $\gamma X$,  we have 
\begin{equation}\hat f^{-1}[ \gamma[X]]=\xi[X].\end{equation} For the remainder of the proof let us identify  $X$, $\gamma[X]$, and $\xi [X]$.  

Suppose $X$ is a composant of $\gamma X$.  Let $x\in X$ be such that $X$ is the composant of $x$ in $\gamma X$. Let $P$ be the composant of $x$ in $\xi X$. Apparently, $X\subseteq P$. On the other hand, if $z\in P\setminus X$, and $Z\supseteq \{x,z\}$ is a proper subcontinuum of $\xi X$, then by (9.1) the subcontinuum $\hat f[Z]$ is proper and violates maximality of $X$ in $\gamma X$.  Thus $P\subseteq X$.  Combining the two inclusions, we have $P=X$.\end{proof}




Lemmas 2 and 3 directly imply Theorem III (stated in Section 1).

\section{Examples in the plane}  
Figures 3 though 5  show seven non-homeomorphic indecomposable plane sets.  All  are one-to-one images of $[0,\infty)$, except for $X_3$, $X_5$ and $X_6$, which are one-to-one images of $(-\infty,\infty)$. Examples $X_0$, $X_3$, $X_4$ and $X_5$ are  composants; $X_1$, $X_2$ and $X_6$ are not.
\vspace{-.2cm}
\begin{figure}[H]
  \centering
  \includegraphics[scale=.45]{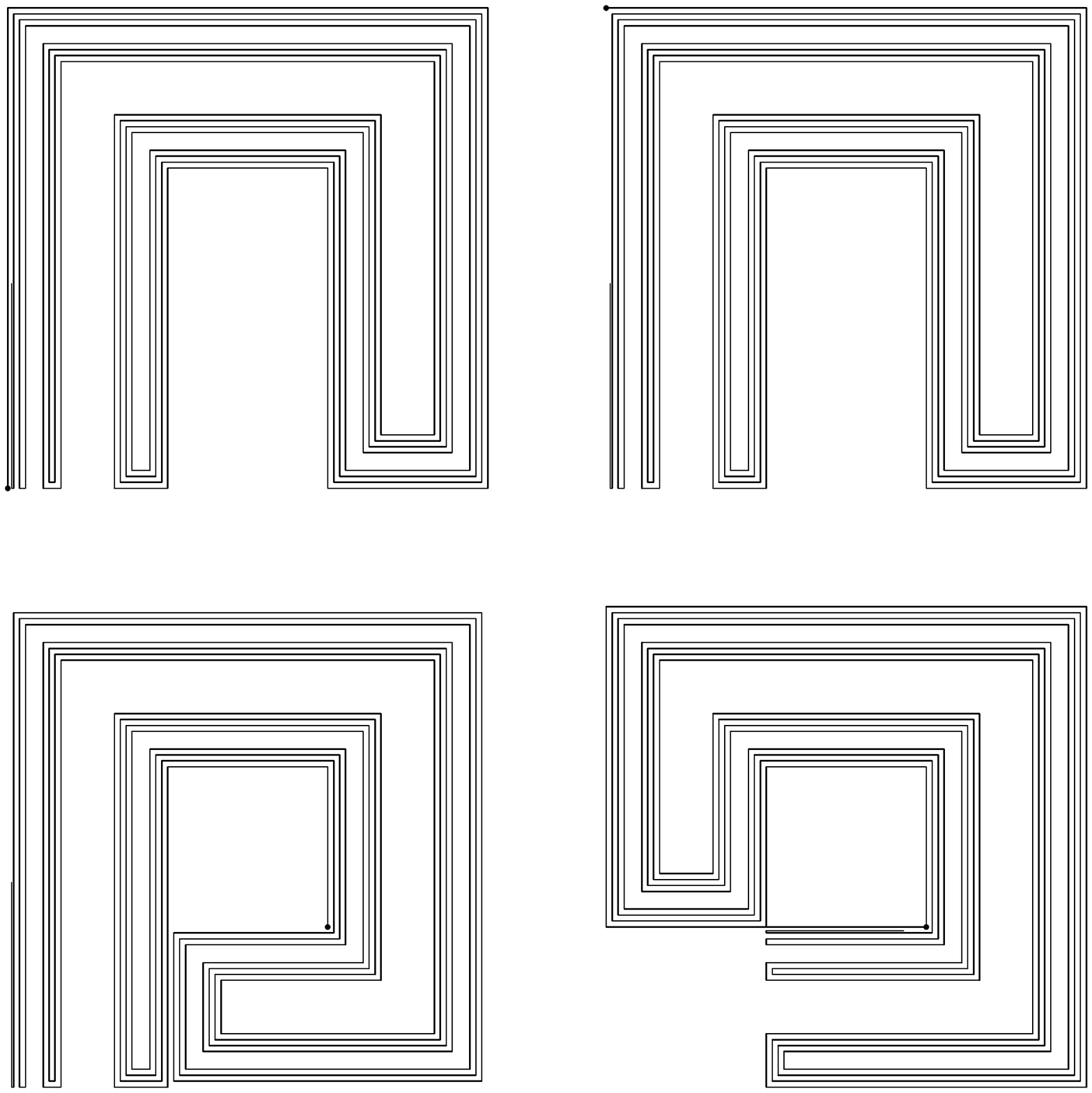} 
 \caption[]{$\begin{array}{lr} X_0 & \ X_1 \\ \\ X_2 & \ X_3 \end{array}$}
  \end{figure}


\begin{itemize} 
\item $X_0$ is the well-known visible composant of the bucket-handle continuum. It is a one-to-one recurrent image of $[0,\infty)$.

\item $X_1$ and $X_2$ are a one-to-one recurrent images of $[0,\infty)$ which fail to be  composants. $X_1$ is not arc-complete, as there is a sequence vertical arcs in $X_1$ whose endpoints limit to both $\langle 0,0\rangle\notin X_1$ and $\langle 0,1\rangle\in X_1$. $X_2$ is not arc-complete; consider the horizontal arcs which limit to both $\langle \frac{1}{3},\frac{1}{3}\rangle$ and $\langle \frac{2}{3},\frac{1}{3}\rangle$.  $X_2$ also has a disconnected proper quasi-component.  The two intervals $P:=\{\frac{1}{3}\}\times [0,\frac{1}{3}]$ and $Q:=\{\frac{2}{3}\}\times[\frac{1}{3},\frac{1}{2}]$ form a quasi-component of the proper closed subset $X_2\cap ([0,1]\times [0,\frac{1}{2}])$.  This contrasts with $X_1$, whose only proper quasi-components are arcs. In any compactification of $X_2$, $P$ and $Q$ must be joined by a proper subcontinuum that goes outside of $X_2$.

\item $X_3$ is a composant image of $(-\infty,\infty)$ which is recurrent but not bi-recurrent.  The invisible composants of the bucket-handle, as well as the composants of the solenoid, are bi-recurrent.

\begin{figure}[H]
\centering
  \includegraphics[scale=.55]{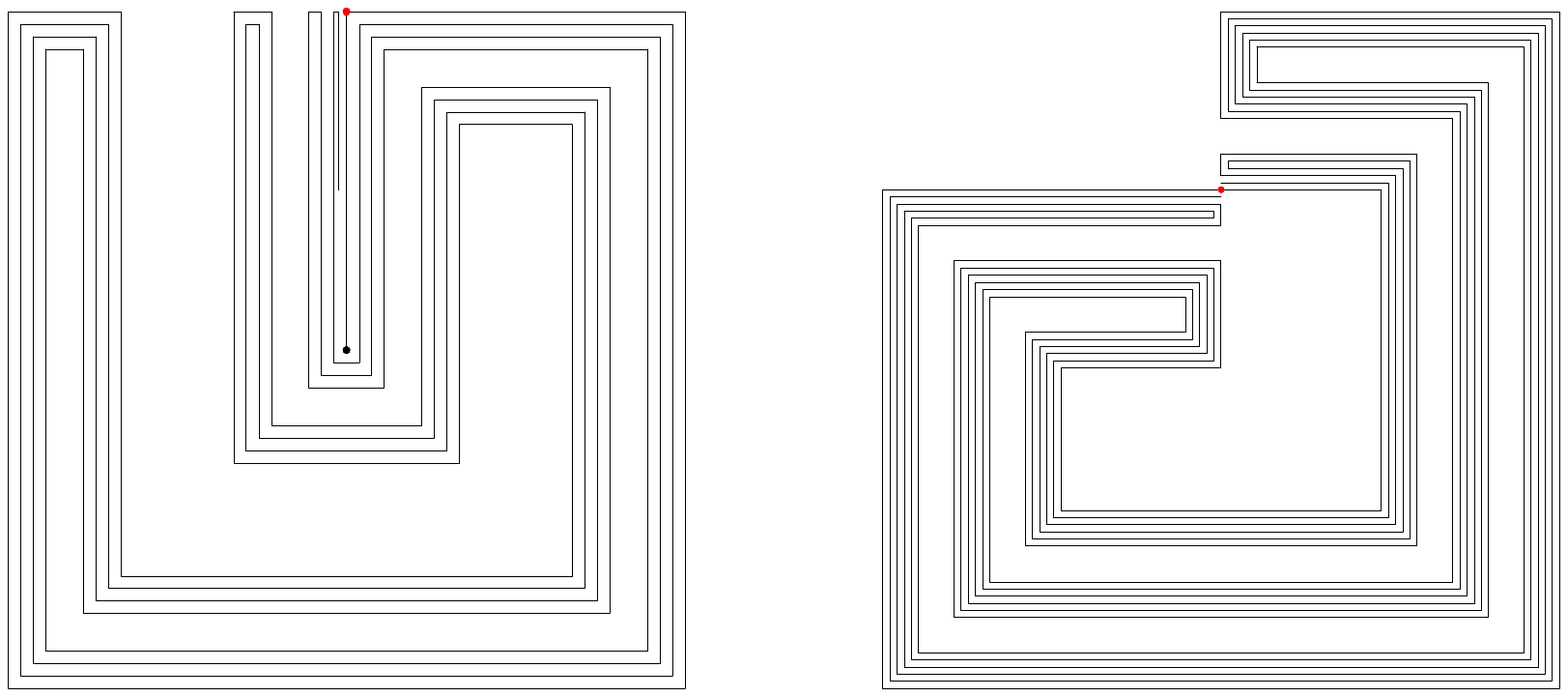}  
  \caption[Text excluding the matrix]{$X_4$\;\;\;\;\;$X_5$}
 \end{figure}

\item $X_4$ is a  one-to-one recurrent composant image of $[0,\infty)$.  $X_5$ and $X_6$ are one-to-one bi-recurrent images of  $(-\infty,\infty)$.\footnote{These examples derive from the quinary double bucket-handle continuum (the inverse limit of arcs with $N$-shaped bonding map).  That continuum has two accessible composants, each of which is a one-to-one image of $[0,\infty)$; $X_5$ is obtained by gluing together the endpoints of these two composants. $X_6$ is a one-to-one continuous image of $X_5$.} $X_5$ is a composant;  $X_6$ is not.  Observe that $X_4$, $X_5$, and $X_6$  have no $\mathbb Q \times (-1,1)$-neighborhoods at their red points. 
\begin{figure}[H]
\centering
  \includegraphics[scale=.37]{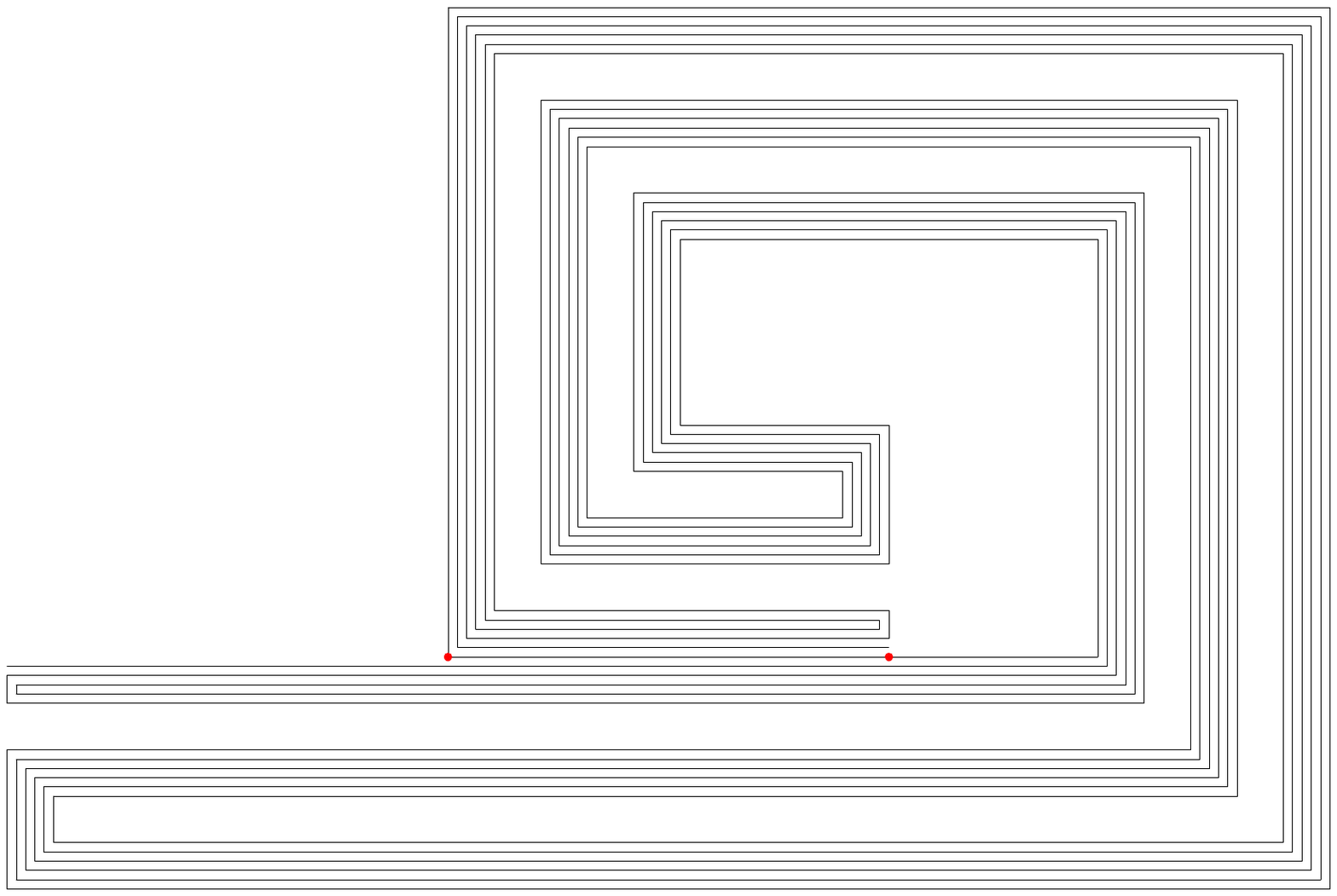}  
  \caption[Text excluding the matrix]{$X_6$}
 \end{figure}
\end{itemize} 







\section{Questions}


 
\begin{uq}Does every one-to-one  composant image of $[0,\infty)$ embed into the plane? \end{uq}


\begin{uq}If $X$ is a one-to-one recurrent image of $[0,\infty)$, and $Y$ is a continuum of which $X$ is a composant, then is $Y$ necessarily chainable?\end{uq}

Compact one-to-one images of $[0,\infty)$ embed into the plane \cite{nad3}. So by Theorem I and the fact that chainable continua are planar, a positive answer to Question 2 implies a positive answer to Question 1.

\begin{uq}Is every linear indecomposable composant equal to a composant of  a continuum each of whose composants is linear? \end{uq}





The final two questions are motivated by a result of F. Burton Jones \cite{jon,jon2}: Every locally connected one-to-one plane image of the (half-)line is locally compact.

\begin{uq}Is every recurrent one-to-one  plane image of $[0,\infty)$ indecomposable?\end{uq}

  \begin{uq}Is every bi-recurrent one-to-one  plane image of $(-\infty,\infty)$ indecomposable?\end{uq}

 Without restricting to the plane, the answers to  Questions 5 and 6 are \text{\tt no}.  There is a one-to-one image of $[0,\infty)$ which is both locally connected and dense in Euclidean 3-space \cite{jon}. By similar methods, one obtains a locally connected  bi-recurrent one-to-one image of $(-\infty,\infty)$.














\end{document}